\newtheorem{lemma}{Lemma}
\newtheorem{theorem}{Theorem}
\newtheorem{corollary}{Corollary}
\newtheorem{remark}{Remark}
\definecolor{darkred}{RGB}{139,0,0}
\definecolor{darkgreen}{RGB}{0,100,0}
\definecolor{darkmagenta}{RGB}{139,0,139}
\newcommand {\ve} {\varepsilon}
\newcommand {\rd} {\,{\rm d}}
\def\blfootnote{\xdef\@thefnmark{}\@footnotetext}\makeatother
\title[Weyl products and uniform distribution]{On Weyl products and uniform distribution modulo one}
\author[Aistleitner]{Christoph Aistleitner}
\address{Institute for Analysis and Number Theory, Graz University of Technology}
\email{aistleitner@math.tugraz.at}
\author[Larcher]{Gerhard Larcher} 
\address{Department of Financial Mathematics and Applied Number Theory, Johannes Kepler University Linz}
\email{gerhard.larcher@jku.at}
\author[Pillichshammer]{Friedrich Pillichshammer} 
\address{Department of Financial Mathematics and Applied Number Theory, Johannes Kepler University Linz}
\email{friedrich.pillichshammer@jku.at}
\author[Saad Eddin]{Sumaia Saad Eddin} 
\address{Department of Financial Mathematics and Applied Number Theory, Johannes Kepler University Linz}
\email{Sumaia.Saad\_Eddin@jku.at}
\author[Tichy]{Robert F. Tichy}
\address{Institute for Analysis and Number Theory, Graz University of Technology}
\email{tichy@tugraz.at}
 \dedicatory{Dedicated to the memory of Edmund Hlawka on the occasion of his hundredth birthday, and to the memory of Hermann Weyl on the centennial of the publication of his fundamental paper.}
\thanks{The authors are supported by the Austrian Science Fund (FWF), Projects F5507-N26 (Larcher and Saad Eddin), F5509-N26 (Pillichshammer) and F5510-N26 (Tichy). These projects are part of the Special Research Program ``Quasi-Monte Carlo Methods: Theory and Applications''. The first author is supported by FWF START-project Y-901.}
\subjclass[2010]{11K06, 11K31, 11L15  }
\keywords{trigonometric product, star-discrepancy, Kronecker sequence, van der Corput sequence}
\begin{document}

\begin{abstract}
In the present paper we study the asymptotic behavior of trigonometric products of the form $\prod_{k=1}^N 2 \sin(\pi x_k)$ for $N \to \infty$, where the numbers $\omega=(x_k)_{k=1}^N$ are evenly distributed in the unit interval $[0,1]$. The main result are matching lower and upper bounds for such products in terms of the star-discrepancy of the underlying points $\omega$, thereby improving earlier results obtained by Hlawka in 1969. Furthermore, we consider the special cases when the points $\omega$ are the initial segment of a Kronecker or van der Corput sequences The paper concludes with some probabilistic analogues.
\end{abstract}

\date{}
\maketitle

\section{Introduction and statement of the results} \label{sect_1}

Let $f$ be a function $f:[0,1] \mapsto \mathbb{R}_{0}^{+}$ and $(x_{k})_{k \geq 1}$ be a sequence of numbers in the unit interval. Much work was done on analyzing so-called {\em Weyl sums} of the form $S_{N} := \sum^{N}_{k=1} f(x_{k})$, and on the convergence behavior of $\frac{1}{N} S_{N}$ to $\int^{1}_{0} f(x) \rd x$. See for example \cite{b,c,a,d}. It is the aim of this paper to propagate the analysis of corresponding ``{\em Weyl products}''
$$
P_{N} := \prod^{N}_{k=1} f(x_{k}),
$$
in particular with respect to their asymptotic behavior for $N \rightarrow \infty$.

Note that, formally, studying products $P_{N}$ in fact is just a special case of studying $S_{N}$, since 
$$
\log P_{N} = \sum^{N}_{k=1} \log f(x_k),
$$
unless $f(x)= 0$ for some $x \in [0,1]$. Thus we will concentrate on functions $f$ for which $f(0) =0$ (and possibly also $f(1) = 0)$.\\

Assuming an even distribution of the sequence $(x_{k})_{k \geq 1}$, one expects $\frac{1}{N} \sum^{N}_{k=1} \log f(x_k)$ to tend to the integral $\int^{1}_{0} \log f(x) \rd x$ if this exists. That means, very roughly, that we expect
$$
\prod^{N}_{k=1} f(x_{k}) \approx \left({\rm e}^{\int^{1}_{0} \log f(x) \rd x}\right)^{N},
$$
which we can rewrite as
$$
\prod^{N}_{k=1} S_{f} \, f(x_k) \approx 1, \qquad \text{where} \quad S_{f} := {\rm e}^{- \int^{1}_{0} \log f(x) \rd x}.
$$
Hence it makes sense to study the asymptotic behavior of the normalized product 
$$
\prod^{N}_{k=1} S_{f}  f(x_k) \ ~\mbox{ rather than } \ ~\prod^{N}_{k=1} f(x_k).
$$
A special example of such products played an important role in \cite{A-H-L} in the context of pseudorandomness properties of the Thue--Morse sequence, where \emph{lacunary} trigonometric products of the form
$$
\prod^{N}_{k=1} 2 \sin(\pi 2^{k} \alpha)
$$
for $\alpha \in \mathbb{R}$ were analyzed. It was shown there that for almost all $\alpha$ and all $\varepsilon >0$ we have
\begin{equation} \label{tm1}
\prod^{N}_{k=1} |2 \sin( \pi 2^{k} \alpha)| \leq \exp\left((\pi + \varepsilon ) \sqrt{N \log \log N}\right)
\end{equation}
for all sufficiently large $N$ and
\begin{equation} \label{tm2}
\prod^{N}_{k=1} |2 \sin( \pi 2^{k} \alpha) | \geq \exp  \left((\pi-\varepsilon)\sqrt{N \log \log N}\right)
\end{equation}
for infinitely many $N$.\\

In the present paper we restrict ourselves to $f(x) = \sin(\pi x)$ and we will extend the analysis of such products to other types of sequences $(x_k)_{k\geq1}$. In particular we will consider two well-known types of uniformly distributed sequences, namely the van der Corput sequence $(x_k)_{k \geq 1}$ and the Kronecker sequence $(\{k \alpha\})_{k \geq 1}$ with irrational $\alpha \in [0,1]$. Furthermore, we will determine the typical behavior of 
$$
\prod^{N}_{k=1} 2 \sin(\pi x_{k}),
$$
that is, the almost sure order of this product for ``random'' sequences $\left(x_{k}\right)_{k \geq 1}$ in a suitable probabilistic model.\\

Such sine-products and estimates for such products play an important role in many different fields of mathematics. We just mention a few of them: interpolation theory (see \cite{l,m}), partition theory (see \cite{sud,wright}), Pad\'e approximation (see \cite{lub staff}), KAM theory and $q$-series (see \cite{celso,hidet,knill,knillfolk,kuznet}), analytic continuation of Dirichlet series (see \cite{Knill+Les,wagner}), and many more.\\

All our results use methods from uniform distribution theory and discrepancy theory, so we will introduce some of the basic notions from these subjects. Let $x_1, \dots, x_N$ be numbers in $[0,1]$. Their \emph{star-discrepancy} is defined as
$$
D_{N}^{*}=D_N^*(x_{1}, \ldots, x_{N}) = \sup_{a\in [0,1]} \left|\frac{A_{N}(a)}{N} -a \right|,
$$
where $A_{N} (a) := \# \left\{1 \leq n \leq N \ : \ x_n \in [0,a)\right\}$. An infinite sequence $(x_k)_{k \geq 1}$ in $[0,1]$ is called \emph{uniformly distributed modulo one} (u.d. mod 1) if for all $a \in [0,1]$ we have
$$
\lim_{N \to \infty} \frac{A_N(a)}{N} = a,
$$
or, equivalently,
$$
\lim_{N \to \infty} D_N^* = 0.
$$
For more basic information on uniform distribution theory and discrepancy, we refer to \cite{drmotatichy,e}.\\

Now we come to our new results. First we will give general estimates for products $\prod^{N}_{k=1} 2 \sin(\pi x_{k})$ in terms of the star-discrepancy $D_{N}^{*}$ of $(x_k)_{1 \leq k \leq N}$. A similar result in a weaker form was obtained by Hlawka \cite{l} (see also \cite{m}).

\begin{theorem} \label{th_kh}
Let $\left(x_{k}\right)_{k \geq 1}$ be a sequence of real numbers from $[0,1]$ which is u.d. mod 1. Then for all sufficiently large $N$ we have
\begin{equation} \label{equ_a}
\prod^{N}_{k=1} 2 \sin (\pi x_{k}) \leq \left(\frac{N}{\Delta_{N}}\right)^{2 \Delta_{N}},
\end{equation}
where $\Delta_{N} := ND_{N}^{*}$.
\end{theorem}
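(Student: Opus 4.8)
The plan is to take logarithms and estimate $\log \prod_{k=1}^N 2\sin(\pi x_k) = \sum_{k=1}^N \log(2\sin(\pi x_k))$ by comparing the sum to the integral $\int_0^1 \log(2\sin(\pi x))\rd x = 0$ via a Koksma-type inequality, but with care near the endpoints $0$ and $1$ where $\log(2\sin(\pi x))$ has logarithmic singularities to $-\infty$. Since we want an \emph{upper} bound for the product, only the points $x_k$ very close to $0$ or $1$ can hurt us (points in the interior contribute something comparable to the integral, which is $0$). So the strategy is: split the index set into the ``bad'' points lying in $[0,\delta) \cup (1-\delta, 1]$ for a suitable threshold $\delta$ depending on $N$ and $\Delta_N$, and the ``good'' remaining points.

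First I would handle the good points. For $x \in [\delta, 1-\delta]$ we have $2\sin(\pi x) \le 2$, so trivially $\log(2\sin(\pi x)) \le \log 2$; more usefully, one can bound $\sum_{k: x_k \in [\delta,1-\delta]} \log(2\sin(\pi x_k))$ from above using the discrepancy: the function $\log(2\sin(\pi x))$ restricted to $[\delta, 1-\delta]$ has bounded variation (variation roughly $\log(1/\delta)$ up to constants), so by Koksma's inequality this sum is at most $N \cdot 0 + (\text{Var}) \cdot \Delta_N \lesssim \Delta_N \log(1/\delta)$, plus we must be careful that the integral over the truncated interval is slightly negative, which only helps. Second, I would handle the bad points: the number of $x_k$ in $[0,\delta)$ is at most $N\delta + \Delta_N$ by the definition of the star-discrepancy (and similarly near $1$), and for each such point $\log(2\sin(\pi x_k)) \le \log 2$ as well — but this is not small enough, so instead one uses that these points cannot \emph{all} be arbitrarily close to $0$: by discrepancy again, $A_N(a) \le Na + \Delta_N$, so the $j$-th closest point to $0$ is at least roughly $(j - \Delta_N)/N$ away, giving $\log(2\sin(\pi x_k)) \le \log(2\pi x_k) \lesssim \log(j/N)$ summed over the relevant $j$'s.

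Putting the pieces together, the total upper bound for $\sum_{k=1}^N \log(2\sin(\pi x_k))$ becomes something like $C\Delta_N \log(1/\delta) + \sum_{j \le N\delta + \Delta_N} \log\!\big(\tfrac{C j}{N}\big)$ (counting both ends). Using $\sum_{j\le M}\log(j/N) \le M\log(M/N) - M + O(\log M)$ and then optimizing the free parameter $\delta$ — the natural choice is $\delta \asymp \Delta_N/N$, i.e. roughly $D_N^*$ — should collapse everything to $O\!\big(\Delta_N \log(N/\Delta_N)\big)$, which upon exponentiating gives exactly $(N/\Delta_N)^{2\Delta_N}$ (the constant $2$ in the exponent coming from the two endpoints, each contributing $\Delta_N\log(N/\Delta_N)$). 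The ``sufficiently large $N$'' hypothesis and u.d.\ mod 1 are used to guarantee $\Delta_N = o(N)$ so that $\delta < 1/2$ and the logarithmic factor $\log(N/\Delta_N)$ is positive and dominates the lower-order error terms.

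I expect the main obstacle to be the delicate bookkeeping near the endpoints: getting the constant in the exponent to come out as exactly $2$ (rather than some larger constant) requires that the ``interior'' Koksma contribution of size $O(\Delta_N\log(1/\delta))$ be absorbed into, rather than added on top of, the endpoint contributions, and that the error terms from $\sum\log(j/N)$ (the $-M$ and $O(\log M)$ pieces) genuinely be lower order after the choice $\delta\asymp\Delta_N/N$. One has to be slightly clever about how the bad-point count $N\delta+\Delta_N$ interacts with the sum $\sum_{j}\log(j/N)$; a clean way is to bound $\log(2\sin(\pi x_k)) \le \log(2\pi x_k)$ directly and pair each bad point $x_k$ with its rank $j$ among the points near that endpoint, using $x_k \ge (j-\Delta_N)/N$ only for $j > \Delta_N$ and the trivial bound $\log(2\sin\pi x_k)\le\log 2$ for the at most $\Delta_N$ points with smaller rank. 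This separation is where the factor $2\Delta_N$ (as opposed to, say, $4\Delta_N$) is won or lost.
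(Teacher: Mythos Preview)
Your overall strategy---take logarithms, truncate near the singularities, and apply the Koksma--Hlawka inequality---is indeed the paper's strategy. However, your execution of the truncation is off in a way that loses the constant $2$ (you would get $4$ or more), and your treatment of the ``bad'' points has the inequality going the wrong direction.

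First, the sign issue. Points $x_k$ near $0$ or $1$ have $\log(2\sin(\pi x_k))$ large and \emph{negative}; they help, not hurt, the upper bound. Your rank argument $x_{(j)} \ge (j-\Delta_N)/N$ gives a \emph{lower} bound on $x_{(j)}$, hence a lower bound on $\log(2\sin(\pi x_{(j)}))$---useless here. For the upper bound, the crudest estimate on the bad points is already the best: their total contribution is at most $0$.

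Second, and more seriously, your claim that ``the integral over the truncated interval is slightly negative, which only helps'' is false. Since $\int_0^1 \log(2\sin(\pi x))\,\rd x = 0$ and $\int_0^\delta \log(2\sin(\pi x))\,\rd x < 0$, we have $\int_\delta^{1-\delta}\log(2\sin(\pi x))\,\rd x > 0$, of size roughly $2\delta\log(1/\delta)$. After multiplying by $N$ and setting $\delta = D_N^*$ this is $\approx 2\Delta_N\log(N/\Delta_N)$, the same order as the Koksma error term. Your split therefore double-counts the main term.

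The paper avoids this by a cleaner truncation: it sets
\[
f_\varepsilon(x) := \max\bigl(\log\sin(\pi x),\ \log\sin(\pi\varepsilon)\bigr),
\]
so that $\log\sin(\pi x_k) \le f_\varepsilon(x_k)$ for every $k$, and applies Koksma--Hlawka \emph{once} to $f_\varepsilon$ on all of $[0,1]$. The point is that
\[
\int_0^1 f_\varepsilon(x)\,\rd x = -\log 2 + 2\varepsilon + O(\varepsilon^3),
\]
because the contribution $2\varepsilon\log\sin(\pi\varepsilon)$ from the flat caps exactly cancels the leading term of $-2\int_0^\varepsilon \log\sin(\pi x)\,\rd x$ (integration by parts), leaving only $2\varepsilon$. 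With $V(f_\varepsilon) = -2\log\sin(\pi\varepsilon)$ and $\varepsilon = D_N^*$, one gets
\[
\Sigma_N \le 2N\varepsilon - 2\Delta_N\log(\pi\varepsilon) + O(N\varepsilon^2) = 2\Delta_N\bigl(1-\log\pi\bigr) + 2\Delta_N\log(N/\Delta_N) + o(\Delta_N),
\]
and since $1-\log\pi < 0$, the bound $(N/\Delta_N)^{2\Delta_N}$ follows for large $N$. The cancellation in the integral of $f_\varepsilon$ is precisely what your good/bad split misses.
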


Concerning the quality of Theorem~\ref{th_kh}, consider the case when $(x_{k})_{k \geq 1}$ is a low-discrepancy sequence such as the van der Corput sequence (which is treated in Theorem~\ref{th_a} below). Then $\Delta_{N} = \mathcal{O}\left(\log N\right)$, and Theorem~\ref{th_kh} gives
\begin{equation} \label{equ_b}
\prod^{N}_{k=1} 2 \sin (\pi x_{k}) \leq N^{\gamma \log N}
\end{equation}
for some $\gamma \in \mathbb{R}^+$ and all sufficiently large $N$. Stronger asymptotic bounds are provided by Theorem~\ref{th_a} below; thus, Theorem \ref{th_kh} does not provide a sharp upper bound in this case.\\

As another example, let $x_k=k/(N+1)$ for $k=1,2,\ldots,N$. This point set has star-discrepancy $D_N^*=1/(N+1)$, and hence the general estimate \eqref{equ_a} gives 
\begin{equation} \label{lhp}
\prod^{N}_{k=1} 2 \sin\left(\pi \frac{k}{N+1}\right) \leq (N+1)^2.
\end{equation}
On the other hand, the product on the left-hand side of \eqref{lhp} is well known to be exactly $N+1$ (see also Lemma~\ref{lem_c_fritz} below). Thus, the general estimate from Theorem \ref{th_kh} has an additional factor $N$ in comparison with the correct order in this case, which is quite close to optimality.\\

As already mentioned above, Hlawka~\cite{l,m} studied similar questions in connection with interpolation of analytic functions on the complex unit disc. There he considered products of the form
$$
\omega_{N}(z)= \prod^{N}_{k=1}(z-\xi_{k})^{2},
$$
where $\xi_{k}$ are points on the unit circle. The main results in
\cite{l,m} are lower and upper bounds of $|\omega_{N}(z)|$ in
terms of the star-discrepancy $D_{N}^*$ of the sequence
$(\arg\frac{1}{2\pi}\xi_{k}), k=1,\ldots , N.$\footnote{The
second paper was published in a seminar proceedings volume called
``Zahlentheoretische Analysis''. Hlawka introduced this term
for applications of number-theoretic methods in
real or complex analysis. In particular, he often applied uniformly distributed
sequences to give discrete versions of continuous models.} It
should also be mentioned that Wagner \cite{wagner}
proved the general lower bound
$$\sup_{| z|= 1}|\omega_{N}(z)|\geq(\log N)^{c}$$
for infinitely $N$, where $c> 0$ is some explicitly given
constant. This solved a problem stated by Erd\H{o}s.\\

In the sequel we will give a second, essentially optimal theorem which estimates products $\prod^{N}_{k=1} 2 \sin (\pi x_{k})$ in terms of the star-discrepancy of the sequence $(x_{k})_{k \geq 1}$. Let $\omega=\left\{x_{1}, \ldots, x_{N}\right\}$ be numbers in $[0,1]$ and let $P_N(\omega)=\prod^{N}_{k=1} 2 \sin(\pi x_k)$. Let $D_{N}^{*} (\omega)$ denote the star-discrepancy of $\omega$. Furthermore, let $d_N$ be a real number from the interval $[1/(2N),1]$, which is the possible range of the star-discrepancy of $N$-element point sets. We are interested in
$$
P_N^{(d_{N})} := \sup_{\omega} P_N(\omega)= \sup_{\omega} \prod^{N}_{k=1}2 \sin(\pi x_k),
$$
where the supremum is taken over all $\omega$ with $D_{N}^{*} (\omega) \leq d_{N}$. We will show
\begin{theorem} \label{th_a_fritz}
Let $\left(d_{N}\right)_{N \geq 1}$ be an arbitrary sequence of reals satisfying $1/(2N) \le d_N \le 1,~N \geq 1,$ and $\lim_{N \rightarrow \infty} d_N = 0$. Then we have:
\begin{itemize}
\item [a)] For all $\varepsilon > 0$ there exist $c(\varepsilon)$ and $N(\varepsilon)$ such that for all $N > N(\varepsilon)$ we  have
$$
P_N^{(d_{N})} \leq c(\varepsilon) \frac{1}{N} \left(\left(\frac{{\rm e}}{\pi}+\varepsilon\right) \frac{1}{d_{N}}\right)^{2N d_{N}}.
$$
\item [b)] For all sufficiently large $N$ we have
$$
P_N^{(d_{N})} \geq \frac{2 \pi^2}{{\rm e}^6} \frac{1}{N}  \left(\frac{{\rm e}}{\pi} \frac{1}{d_N}\right)^{2 N d_N}.
$$
\end{itemize}
\end{theorem}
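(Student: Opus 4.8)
The plan is to translate the product $\prod_{k=1}^N 2\sin(\pi x_k)$ into a sum via logarithms and then apply Koksma-type inequalities, treating the upper and lower bounds separately. For part~a), I would write $\log P_N(\omega) = \sum_{k=1}^N \log(2\sin(\pi x_k))$ and compare this against $N\int_0^1 \log(2\sin(\pi x))\,\mathrm{d}x = 0$. The function $g(x) := \log(2\sin(\pi x))$ has integrable singularities at $0$ and $1$, so a direct application of the Koksma inequality is not available; instead I would truncate, replacing $g$ by $g_\eta := \max(g,\log(2\sin(\pi\eta)))$ for a cutoff $\eta$ depending on $N$ (morally $\eta \asymp d_N$, since the discrepancy bound guarantees at most $\Delta_N = Nd_N$ points can lie in $[0,\eta)\cup(1-\eta,1]$ beyond the expected count). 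The contribution of the points near the singularity is controlled crudely: at most $\mathcal{O}(Nd_N)$ of them fall in the boundary strip, and for each such point $2\sin(\pi x_k)$ is at least $\asymp 1/N$ (since $x_k \ge 1/(2N)$ is forced, or one handles the genuinely tiny ones by noting $\prod 2\sin(\pi x_k) \le \prod(\text{something})$ — here one uses that pushing a point toward $0$ only decreases the product, so the extremal $\omega$ has no points too close to the boundary). The truncated function $g_\eta$ has bounded variation $\mathrm{Var}(g_\eta) \asymp \log(1/\eta)$, so Koksma gives $\big|\sum_{k=1}^N g_\eta(x_k)\big| \le \mathrm{Var}(g_\eta)\cdot N d_N \asymp N d_N \log(1/d_N)$. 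Exponentiating and carefully tracking constants — the $\mathrm{e}/\pi$ comes from $2\sin(\pi\eta) \approx 2\pi\eta$ and optimizing the cutoff, the extra $1/N$ from the boundary points — should yield the stated bound with the $\varepsilon$ absorbing the lower-order slack.

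For part~b) I need to exhibit an explicit $\omega$ with $D_N^*(\omega) \le d_N$ achieving the claimed size. The natural candidate is to take most of the points forming an equally-spaced grid (which keeps the product near $1$, cf.\ the remark after Theorem~\ref{th_kh} that $\prod 2\sin(\pi k/(N+1)) = N+1$) but to concentrate a block of $\approx Nd_N$ points cleverly placed near (but not too near) the point $1/2$, where $\sin(\pi x)$ is maximal and equals $1$, so each contributes a factor close to $2$. More precisely I would place roughly $Nd_N$ points in a short interval around $1/2$ of length comparable to $d_N$; such a perturbation of the uniform grid changes the discrepancy by $\mathcal{O}(d_N)$, staying within budget, while multiplying the product by roughly $2^{Nd_N}$ — but $2^{Nd_N}$ is not quite $(\mathrm{e}/(\pi d_N))^{2Nd_N}$. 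To get the sharper exponent one should instead spread the $\approx 2Nd_N$ special points so that they sample the singularity-free bulk optimally: placing them at spacing $\asymp d_N$ inside an interval, so that $\prod$ over them behaves like $\exp\!\big(2Nd_N \cdot \frac{1}{\text{length}}\int \log(2\sin(\pi x))\,\mathrm{d}x\big)$ over a sub-interval where this average is positive and of size $\log(1/d_N) + \log(\mathrm{e}/\pi) + o(1)$. The constant $2\pi^2/\mathrm{e}^6$ is a concrete artifact of boundary effects in this construction and of the exact grid identity in Lemma~\ref{lem_c_fritz}; I would pin it down at the end rather than carry it through.

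The main obstacle is the upper bound in part~a): making the truncation argument genuinely tight rather than losing a constant factor in the exponent. The crude bound $\mathrm{Var}(g_\eta)\cdot \Delta_N$ from Koksma is off by a constant multiple in the exponent (Koksma is not tight for this $g$), so to land exactly the constant $\mathrm{e}/\pi$ one must argue more carefully — presumably by sorting the $x_k$ and estimating $\sum \log(2\sin(\pi x_{(k)}))$ against $\sum \log(2\sin(\pi \cdot (\text{ideal position})))$ order-statistic by order-statistic, using that $\log(2\sin(\pi x))$ is concave on $(0,1/2]$, so that the worst case has the $\Delta_N$ "extra" points packed as close to $1/2$ as the discrepancy constraint permits. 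Quantifying "as close as permitted" in terms of $d_N$ and extracting the $(\mathrm{e}/\pi)/d_N$ base is where the real work lies; the lower bound in part~b) should then be a matching explicit instance of that same extremal configuration, which is reassuring as a consistency check but still requires the fiddly discrepancy computation for the perturbed grid.
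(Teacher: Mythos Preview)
Your Koksma-truncation approach for part~a) is essentially the proof of Theorem~\ref{th_kh} in the paper, which yields only the weaker bound $(N/\Delta_N)^{2\Delta_N}$; you correctly recognize that it will not produce the sharp constant $\mathrm{e}/\pi$. The paper takes a different and much shorter route that handles both parts at once: it first identifies the \emph{exact} extremizer. Since moving any point of $\omega$ toward $1/2$ strictly increases $P_N(\omega)$, the supremum under the constraint $D_N^*(\omega)\le d_N$ is attained at the unique configuration $\widetilde\omega$ in which no point can be pushed further toward $1/2$ without the discrepancy exceeding $d_N$. This $\widetilde\omega$ (Lemma~\ref{lem_a_fritz}) consists of the equally spaced points $M/N,(M+1)/N,\ldots,(N-M)/N$ (omitting $N/(2N)$) together with $2M$ copies of $1/2$, where $M=Nd_N$. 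Using the identity $\prod_{k=1}^{N-1}2\sin(\pi k/N)=N$ from Lemma~\ref{lem_c_fritz} one gets the closed form
\[
P_N^{(d_N)}=P_N(\widetilde\omega)=2N\Bigl(\prod_{k=1}^{M-1}\sin(\pi k/N)\Bigr)^{-2},
\]
and both a) and b) follow from upper and lower estimates of the single sum $\sum_{k=1}^{M-1}\log\sin(\pi k/N)$ via comparison with $\int_0^{(M-1)/N}\log\sin(\pi x)\,\mathrm{d}x\approx \tfrac{M}{N}\log\tfrac{\pi M}{\mathrm{e}N}$. The base $\mathrm{e}/\pi$ drops out of this integral, and the $1/N$ prefactor comes from the $2N$ above after the exponents are reorganized.

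Your final paragraph does arrive at the right extremal picture (points ``packed as close to $1/2$ as the discrepancy constraint permits''), and you note that b) should be the same configuration---both observations are exactly right. The missing insight is that this \emph{replaces} rather than repairs the Koksma argument: once $\widetilde\omega$ is pinned down, the problem is a concrete estimate of one explicit finite product, and no discrepancy inequality enters at all. Your intermediate construction for b) (clustering points near $1/2$ to gain factors of $2$) undercounts the effect: the dominant contribution to $P_N(\widetilde\omega)$ is not the $2^{2M}$ from the points at $1/2$ but the \emph{removal} of the $2(M-1)$ grid points nearest $0$ and $1$, each of which would have contributed a tiny factor $2\sin(\pi k/N)\asymp k/N$.
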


To check the quality of Theorem \ref{th_a_fritz}, consider the case $d_N=1/(N+1)$ which includes the point sets $x_k=k/(N+1)$ for $k=1,2,\ldots,N$ mentioned before. Then the upper estimate in Theorem~\ref{th_a_fritz} gives the correct order of magnitude $P_N =\mathcal{O}(N)$.\\

Let us now focus on products of the form 
$$
\prod^{N}_{n=1} 2 \sin (\pi \{n \alpha\}) =\prod^{N}_{n=1} 2 \sin (\pi n \alpha),
$$ 
where $\alpha$ is a given irrational number, i.e., we consider the special case when $(x_n)_{n \geq 1}$ is the Kronecker sequence $(\{n \alpha \})_{n \geq 1}$.
Such products play an essential role in many fields and are the best studied such Weyl products in the literature. See for example \cite{h,f,j,i,Knill+Les,Lub,g,VerMes}. Before discussing these products in detail, let us recall some historical facts. By Kronecker's
approximation theorem, the sequence $(n\alpha)_{n \ge 1}$ is everywhere dense modulo 1; i.e., the sequence of fractional
parts $(\{n\alpha\})_{n \geq 1}$ is dense in $[0,1]$. At the
beginning of the 20th century various authors considered this
sequence (and generalizations such as $(\{\alpha n^{d}\})_{n \geq 1}$, etc.) from
different points of view; see for instance Bohl \cite{bohl}, Weyl \cite{weyl2} and Sierpi\'nksi \cite{sierp}. An important impetus came from celestial
mechanics. It was Hermann Weyl in his seminal paper \cite{weyl} who opened new and much more general features of this
subject by introducing the concept of uniform distribution for arbitrary sequences $(x_{k})_{k\geq 1}$ in the unit interval
(as well as in the unit cube $[0,1]^{s}$). This paper heavily influenced the
development of uniform distribution theory, discrepancy theory and the theory of quasi-Monte Carlo integration throughout the last 100
years. For the early history of the
subject we refer to Hlawka and Binder \cite{hla2}.\\

Numerical experiments suggest that for integers $N$ with $q_{l} \leq N < q_{l+1}$, where $\left(q_{l}\right)_{l \geq 0}$ is the sequence of best approximation denominators of $\alpha$,
\begin{equation} \label{equ_stern_fritz}
\text{the product attains its maximal value for}~N = q_{l+1}-1.
\end{equation} 
Moreover we conjecture that always
\begin{equation} \label{equ_6neu_fritz}
\limsup_{q \rightarrow \infty} \frac{1}{q} \prod^{q-1}_{n=1} |2 \sin (\pi n \alpha) | < \infty.
\end{equation}

Compare these considerations also with the conjectures stated in \cite{Lub}.
To illustrate these two assertions see Figures~\ref{fig_b} and \ref{fig_c}, where for  $\alpha = \sqrt{2}$ we plot $\prod^{N}_{n=1} |2 \sin (\pi n \alpha)|$ for $N=1,\ldots, 500$ (Figure~\ref{fig_b}) and the normalized version $\tfrac{1}{N} \prod^{N}_{n=1} |2 \sin (\pi n \alpha) |$ for $N=1, \ldots, 500$ (Figure~\ref{fig_c}). Note that the first best approximation denominators of $\sqrt{2}$ are given by $1,2,5,12,29,70,169,408,\ldots.$\\
\begin{figure}
\begin{center}
\includegraphics[angle=0,width=100mm]{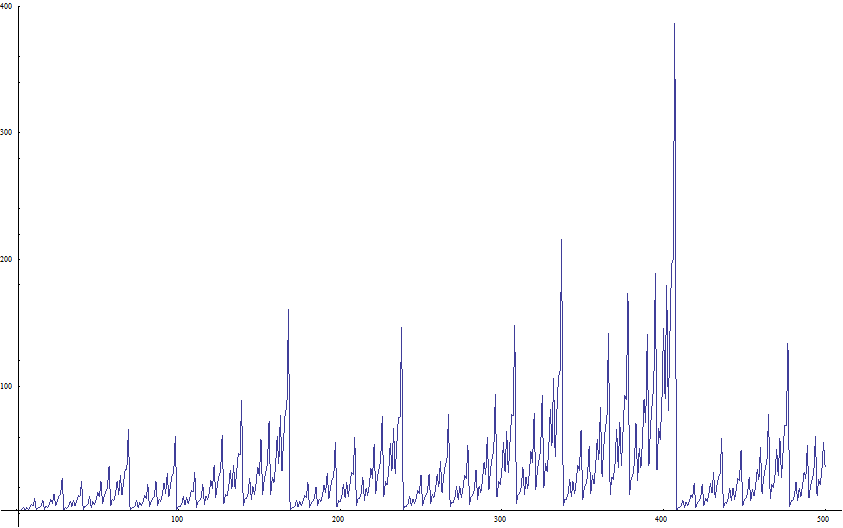}
\caption{$\prod^{N}_{n=1} \left|2 \sin (\pi n \alpha) \right|$ for $N=1,\ldots, 500$ and  $\alpha=\sqrt{2}$ }\label{fig_b}
\end{center}
\end{figure}

\begin{figure}
\begin{center}
\includegraphics[angle=0,width=100mm]{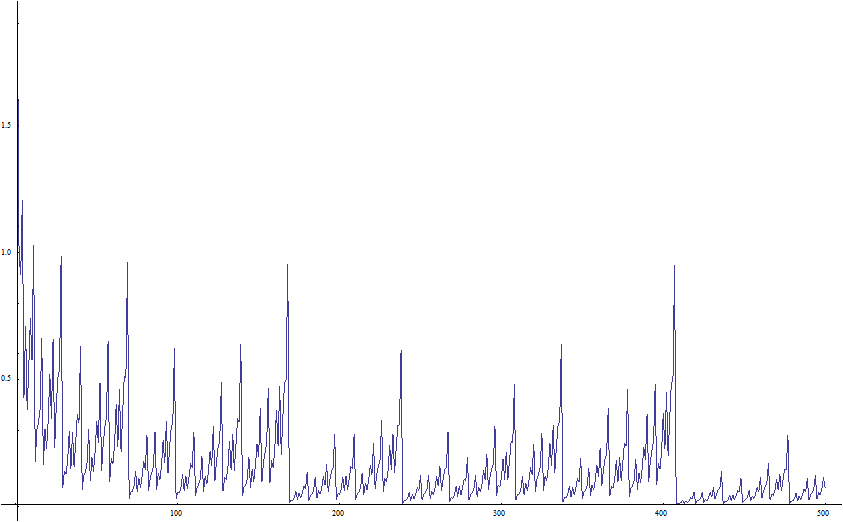}\\
\caption{$\tfrac{1}{N}\prod^{N}_{n=1} \left|2 \sin (\pi n \alpha) \right|$ for $N=1,\ldots, 500$ and  $\alpha=\sqrt{2}$ }\label{fig_c}
\end{center}
\end{figure}

For the case $N= q-1$ for some best approximation denominator $q$ the product $\prod^{q-1}_{n=1} |2 \sin (\pi n \alpha)|$ already was considered in \cite{f,g}. In particular, it was shown there that
\begin{equation} \label{equ_delta_fritz}
\lim_{q \rightarrow \infty}\log \prod^{q-1}_{n=1} |2 \sin (\pi n \alpha)| = \lim_{q \rightarrow \infty} \frac{1}{q} \sum^{q-1}_{n=1} \log|2 \sin (\pi n \alpha)| = 0,
\end{equation}
when $q$ runs through the sequence of best approximation denominators. Indeed, we are neither able to prove assertion \eqref{equ_stern_fritz} nor assertion \eqref{equ_6neu_fritz}. Nevertheless we want to give a quantitative estimate for the case $N=q-1$, i.e., also a quantitative version of \eqref{equ_delta_fritz}, before we will deal with the general case.

\begin{theorem} \label{th_proda}
Let $q$ be a best approximation denominator for $\alpha$. Then
$$
1 \leq \prod^{q-1}_{n=1} |2 \sin (\pi n \alpha)| \leq \frac{q^{2}}{2}.
$$
\end{theorem}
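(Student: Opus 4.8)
The plan is to compare the Kronecker product with the product over the equally spaced points $1/q,\dots,(q-1)/q$, for which Lemma~\ref{lem_c_fritz} (with $N=q-1$) gives $\prod_{j=1}^{q-1}2\sin(\pi j/q)=q$. We may assume $q\ge2$. Write $q=q_l$ in the sequence $(q_l)_{l\ge0}$ of best approximation denominators of $\alpha$, let $p=p_l$ be the corresponding numerator (so $\gcd(p,q)=1$), and put $\beta:=\alpha-p/q$; then $q|\beta|=\|q\alpha\|<1/q_{l+1}\le 1/(q+1)$, so $|n\beta|<1/q$ for $1\le n\le q-1$. As $\gcd(p,q)=1$, the map $n\mapsto j_n:=np\bmod q$ is a bijection of $\{1,\dots,q-1\}$, and from $n\alpha=(\text{integer})+j_n/q+n\beta$ together with $j_n/q\in[1/q,(q-1)/q]$ and $|n\beta|<1/q$ one checks $\{n\alpha\}=j_n/q+n\beta\in(0,1)$. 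Writing $n(\cdot)$ for the inverse permutation, this gives
\[
\frac1q\prod_{n=1}^{q-1}|2\sin(\pi n\alpha)|=\prod_{j=1}^{q-1}r_j,\qquad r_j:=\frac{\sin\bigl(\pi(j/q+n(j)\beta)\bigr)}{\sin(\pi j/q)}>0 .
\]
Every $n(j)\beta$ has the sign of $\beta$, and the cases $\beta\gtrless0$ are interchanged by $j\leftrightarrow q-j$; so assume $\beta<0$.

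For the upper bound I would use that $x\mapsto\sin(\pi x)/x$ is decreasing on $(0,1)$. If $j\le q/2$ then $j/q+n(j)\beta\le j/q\le1/2$, hence $r_j\le1$ (this also covers $j=q/2$ when $q$ is even). If $j>q/2$, set $m:=q-j$, so that $\sin(\pi j/q)=\sin(\pi m/q)$ and $\sin(\pi(j/q+n(j)\beta))=\sin(\pi(m/q+n(j)|\beta|))$; the monotonicity then yields
\[
r_j\le\frac{m/q+n(j)|\beta|}{m/q}=1+\frac{n(j)\|q\alpha\|}{m}\le1+\frac1m ,
\]
because $n(j)\|q\alpha\|<(q-1)/(q+1)<1$. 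As $j$ runs over $\{j:j>q/2\}$, the value $m=q-j$ runs once over $\{1,\dots,M\}$ with $M:=\lceil q/2\rceil-1$, so $\prod_{j>q/2}r_j\le\prod_{m=1}^M(1+1/m)=M+1$; hence $\prod_jr_j\le(q+1)/2$ and $\prod_n|2\sin(\pi n\alpha)|\le q(q+1)/2$. To sharpen this to $q^2/2$ for odd $q$, one keeps for the single index with $m=1$ the better estimate $r_j\le1+n(j)\|q\alpha\|<1+(q-1)/(q+1)=2q/(q+1)$ and combines it with $\prod_{m=2}^M(1+1/m)=(M+1)/2=(q+1)/4$, obtaining $\prod_jr_j\le\frac{2q}{q+1}\cdot\frac{q+1}{4}=\frac q2$ and thus $\prod_n|2\sin(\pi n\alpha)|\le q^2/2$.

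For the lower bound, the same identity reads $\prod_{j=1}^{q-1}r_j^{-1}=q\,\bigl(\prod_n|2\sin(\pi n\alpha)|\bigr)^{-1}$, so it suffices to show $\prod_jr_j^{-1}\le q$. Splitting again by the sign of $\beta$: now $r_j^{-1}\le1$ for $j>q/2$, while for $j\le q/2$ the monotonicity only gives $r_j\ge1-n(j)\|q\alpha\|/j$. This is the delicate step. A factor-by-factor bound of $\prod_{j\le q/2}(1-n(j)\|q\alpha\|/j)^{-1}$ is far too lossy: $r_1$ alone may be as small as $2/(q+1)$, and treating the remaining factors separately costs a further factor of order $q$, so this route only yields $\prod_n|2\sin(\pi n\alpha)|\gtrsim1/q$. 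To recover the sharp constant $1$ one must use that the perturbed points $\{n(j)\alpha\}$ with $j$ small are precisely the three-distance points closest to $0$ among $\{0,\alpha,\dots,(q-1)\alpha\}\bmod1$ — whose two gap lengths $\|q_{l-1}\alpha\|$ and $\|q_{l-1}\alpha\|+\|q\alpha\|$ are controlled by the identity $q\|q_{l-1}\alpha\|+q_{l-1}\|q\alpha\|=1$ — so that the sines $\sin(\pi\{n(j)\alpha\})$, compared against $\sin(\pi/q),\sin(2\pi/q),\dots$, are strongly correlated rather than independently small; equivalently, one runs an induction on the continued fraction (Ostrowski) data of $\alpha$, passing from level $l$ to $l+1$. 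This structural estimate — a quantitative version of \eqref{equ_delta_fritz} — is the main obstacle; once it supplies $\prod_{j\le q/2}r_j\ge1/q$ (hence $\prod_jr_j\ge1/q$), we conclude $\prod_n|2\sin(\pi n\alpha)|\ge1$.
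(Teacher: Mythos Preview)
Your upper bound is correct, though considerably more elaborate than the paper's: the paper simply bounds the factor in each interval $[m/q,(m+1)/q)$ by the sine at the endpoint nearer $\tfrac12$, getting
\[
\prod_{n=1}^{q-1}|2\sin(\pi n\alpha)|\ \le\ \Bigl(\prod_{n=2}^{q-1}2\sin(\pi n/q)\Bigr)\cdot 2\sin\tfrac\pi2=\frac{q}{\sin(\pi/q)}\le\frac{q^2}{2}
\]
in one line.

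The lower bound, however, is not proved, and you have misdiagnosed the difficulty. Your claim that ``$r_1$ alone may be as small as $2/(q+1)$'' is false, because the index $n(1)$ is \emph{not} free: from $p_lq_{l-1}\equiv(-1)^{l-1}\pmod{q_l}$ and your convention $\beta<0$ (so $l$ is odd) one reads off $n(1)=q_{l-1}=:q^{-}$. Hence the point with $j=1$ is exactly
\[
\tfrac1q+n(1)\beta=\tfrac1q-q^{-}|\beta|=\|q^{-}\alpha\|,
\]
using the identity $q\|q^{-}\alpha\|+q^{-}\|q\alpha\|=1$. Since $q^{-}<q<q^{+}$ one has $\|q^{-}\alpha\|>1/(q+q^{-})>1/(2q)$, so $r_1\ge\sin(\pi/(2q))/\sin(\pi/q)$ --- roughly $\tfrac12$, not $O(1/q)$. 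With this single input your own framework closes immediately: for $2\le j\le\lfloor q/2\rfloor$ the crude bound $r_j\ge\sin(\pi(j-1)/q)/\sin(\pi j/q)$ telescopes, giving
\[
\prod_{j\le q/2}r_j\ \ge\ \frac{\sin(\pi/(2q))}{\sin(\pi\lfloor q/2\rfloor/q)}\ \ge\ \sin\!\Bigl(\frac{\pi}{2q}\Bigr)\ \ge\ \frac1q,
\]
whence $\prod_n|2\sin(\pi n\alpha)|\ge q\sin(\pi/(2q))\ge1$. This is precisely the paper's argument: it singles out the one point in the extreme interval as $\{q^{-}\alpha\}$, bounds it by $\sin(\pi/(2q))$, and bounds every other factor by an interval endpoint. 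No three-distance theorem, no Ostrowski induction --- the ``main obstacle'' you describe does not exist.
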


%\begin{corollary} \label{co_proda}
%Let $q$ be a best approximation denominator for $\alpha$. Then
%$$
%\frac{1}{q} \sum^{q-1}_{n=1} \log |2 \sin (\pi n \alpha)| \leq  \frac{2 \log q}{q}.
%$$
%\end{corollary}

Next we consider general $N\in \mathbb{N}$:

\begin{theorem} \label{th_prodb}
Let $\alpha := [0; a_1, a_2,a_3, \ldots]$ be the continued fraction expansion of the irrational number $\alpha \in [0,1]$. Let $N \in \mathbb{N}$ be given, and denote its Ostrowski expansion by 
$$
N = b_{l}q_{l} + b_{l-1} q_{l-1} + \cdots + b_{1}q_{1} + b_{0}
$$ 
where $l=l(N)$ is the unique integer such that $q_l \le N < q_{l+1}$, where $b_{i} \in \{0,1,\ldots,a_{i+1}\}$, and where $q_1,q_2,\ldots$ are the best approximation denominators for $\alpha$. Then we have
$$
\prod^{N}_{n=1} \left|2 \sin (\pi n \alpha) \right| \leq \prod^{l}_{i=0} 2^{b_{i}} q^{3}_{i}.
$$
\iffalse
Furthermore,
\begin{align*}
\lefteqn{\frac{1}{N} \sum^{N}_{n=1} \log |2 \sin (\pi n \alpha)|}\\
& \leq  (\log 2) \left(\frac{a_1+a_2+\cdots+a_{l+1}}{N}\right) + 3 \frac{\log N}{N} \left(\frac{\log N}{\log \phi}+1\right)
\end{align*}
\fi
\end{theorem}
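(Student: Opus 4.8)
\emph{Proof plan.} Write $\|y\|$ for the distance of $y\in\mathbb{R}$ to the nearest integer and $p_i/q_i$ for the convergents of $\alpha$, so that $\|q_i\alpha\|=|q_i\alpha-p_i|$ and $\|q_{i-1}\alpha\|=a_{i+1}\|q_i\alpha\|+\|q_{i+1}\alpha\|$. The plan is to split $\{1,\ldots,N\}$ along the Ostrowski expansion into $b_0+b_1+\cdots+b_l$ consecutive blocks — the $b_i$ blocks of ``level $i$'' each consisting of $q_i$ consecutive integers — to bound the sine-product over each block, and to multiply. Concretely, set $M_i:=\sum_{j=i+1}^{l}b_jq_j$ for $0\le i\le l$ (so $M_l=0$) and, for $0\le t\le b_i-1$, let $B_{i,t}:=\{M_i+tq_i+1,\ldots,M_i+(t+1)q_i\}$. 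These sets partition $\{1,\ldots,N\}$, and writing $n=M_i+tq_i+m$ with $1\le m\le q_i$ on $B_{i,t}$ one obtains
\[
\prod_{n=1}^{N}\bigl|2\sin(\pi n\alpha)\bigr|=\prod_{i=0}^{l}\ \prod_{t=0}^{b_i-1}\ \prod_{m=1}^{q_i}\bigl|2\sin\bigl(\pi(m\alpha+\sigma_{i,t})\bigr)\bigr|,\qquad \sigma_{i,t}:=\{(M_i+tq_i)\alpha\}.
\]
It thus suffices to bound, for each $i$, the inner double product over $t$ and $m$ by $2^{b_i}q_i^{3}$; for the small levels (in particular $q_0=1$) each factor is at most $2$ and this is trivial, so one may assume $q_i$ large.

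Step one is to see that the shifts are small. From $q_j\alpha\equiv(-1)^{j}\|q_j\alpha\|\pmod1$, the subadditivity of $\|\cdot\|$, and a telescoping of the identity for $\|q_{j-1}\alpha\|$, one gets $\|M_i\alpha\|<\|q_i\alpha\|+\|q_{i+1}\alpha\|<2\|q_i\alpha\|$ and hence
\[
\|\sigma_{i,t}\|\le\|M_i\alpha\|+t\|q_i\alpha\|<(b_i+1)\|q_i\alpha\|\le(a_{i+1}+1)\|q_i\alpha\|<2\|q_{i-1}\alpha\|<\tfrac{2}{q_i}.
\]
So every level-$i$ shift lies (modulo $1$) within two mesh-cells of $0$ with respect to the equidistant grid $\{0,1/q_i,\ldots,(q_i-1)/q_i\}$, and consecutive shifts differ by only $\|q_i\alpha\|<1/q_{i+1}$, far below the mesh width. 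One also notes $|m\alpha-mp_i/q_i|=m\|q_i\alpha\|/q_i\le\|q_i\alpha\|$ for $1\le m\le q_i$.

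The heart of the matter is the block estimate. Since $\gcd(p_i,q_i)=1$, the numbers $mp_i/q_i$ for $m=1,\ldots,q_i$ run through $0,1/q_i,\ldots,(q_i-1)/q_i$ modulo $1$, so the roots-of-unity identity (see Lemma~\ref{lem_c_fritz}) gives $\prod_{m=1}^{q_i}|2\sin(\pi(mp_i/q_i+\sigma))|=|2\sin(\pi q_i\sigma)|\le 2$ for every $\sigma$. Comparing each true block-product with this grid-product: for the $O(1)$ indices $m$ whose grid point $mp_i/q_i+\sigma_{i,t}$ lies within $O(\|q_i\alpha\|)$ of an integer, bound the true factor crudely by $2$; for the remaining $m$ the ratio of the two products has the form $\prod_m\bigl(1+O(\|q_i\alpha\|/\|mp_i/q_i+\sigma_{i,t}\|)\bigr)$. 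For the \emph{first} level-$i$ block the shift $\sigma_{i,0}$ is tiny ($\ll1/q_i$), so that block is a small perturbation of $\prod_{n=1}^{q_i-1}|2\sin(\pi n\alpha)|$, which by (a perturbed form of) Theorem~\ref{th_proda} contributes at most essentially $q_i^{2}$. It remains to show that all further $b_i-1$ level-$i$ blocks together contribute at most $2^{b_i}q_i$, so that the level-$i$ factor is $\le 2^{b_i}q_i^{3}$; multiplying over $i=0,\ldots,l$ finishes.

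The main obstacle is precisely this last step: forcing the exponent of $q_i$ down to $3$ \emph{uniformly in the Ostrowski digit $b_i$}. A block-by-block comparison to the grid, performed independently, loses a factor $q_i^{O(1)}$ on \emph{each} of the $b_i$ blocks — the harmonic-type sum $\sum_m 1/\|mp_i/q_i+\sigma_{i,t}\|$ produces a logarithm which then exponentiates — and yields only the far weaker bound $q_i^{O(b_i)}$. One must instead treat the whole level-$i$ chunk at once, exploiting that (i) all the shifts $\sigma_{i,t}$ sit inside one mesh-cell, so that the ``defect'' factor of order $q_i$ (arising because a full block always has one point near an integer) is paid once per level rather than once per block; (ii) successive shifts move each grid point by only $\|q_i\alpha\|\sim 1/q_{i+1}$, which keeps the point-by-point drift over the $b_i$ blocks under control; and (iii) the sharp cancellation underlying Theorem~\ref{th_proda}. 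A convenient way to organize everything is a downward induction on $l$: peeling off the top Ostrowski digit multiplies the product by a product over $b_lq_l$ consecutive terms that lives in a window of length $q_l$ and carries a shift of size $<\|q_{l-1}\alpha\|$, after which the bound for level $l-1$ is fed back in. The remaining ingredients — continued-fraction manipulations, the Ostrowski decomposition, and the roots-of-unity identity — are routine.
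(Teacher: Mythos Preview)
Your setup matches the paper's: the Ostrowski block decomposition, the shift bound $\|\sigma_{i,t}\|<2/q_i$, and the identity $\prod_{l=0}^{q_i-1}|2\sin(\pi(l/q_i+\sigma))|=|2\sin(\pi q_i\sigma)|\le 2$ are exactly the tools used. You also correctly identify the obstruction: bounding the $b_i$ level-$i$ blocks one at a time gives only $q_i^{O(b_i)}$, and one must treat them all together to force the exponent of $q_i$ down to $3$. But that is precisely where you stop. You call this ``the main obstacle'', list three features (i)--(iii) that should help, and propose a downward induction on $l$ --- without ever saying how any of this produces the inequality $\prod_{t,m}|2\sin(\pi(m\alpha+\sigma_{i,t}))|\le 2^{b_i}q_i^{3}$. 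That inequality \emph{is} the theorem; the rest is bookkeeping. (The appeal to ``a perturbed form of Theorem~\ref{th_proda}'' for the first block is also a handwave: $\sigma_{i,0}$ is of order $1/q_i$, not $o(1/q_i)$, so the block is not a small perturbation in the relevant sense.)

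The device you are missing, and that the paper supplies, is a concrete \emph{rearrangement}. Write $n=N_i+dq_i+k$ with $0\le d<b_i$, $1\le k\le q_i$, so that $\{n\alpha\}=\{\kappa+l(k)/q_i+d\tilde\theta_i+k\theta_i\}$ with $\tilde\theta_i=q_i\theta_i$ and $\kappa=\{N_i\alpha\}\bmod(1/q_i)$. Now round each tiny term $k\theta_i\in(0,\tilde\theta_i)$ to $0$ or to $\tilde\theta_i$, choosing whichever pushes the point toward $1/2$ (one point is sent to $1/2$, and the single mesh cell straddling an integer is handled separately); since $|\sin|$ is unimodal this only increases every factor. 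After this rounding the level-$i$ product factors exactly: for $b_i-1$ values of $d$ one obtains a \emph{complete} period $\prod_{l=0}^{q_i-1}|2\sin(\pi(\kappa+l/q_i+d\tilde\theta_i))|\le 2$ --- this is where the $2^{b_i}$ comes from and why no power of $q_i$ accumulates across blocks --- while one leftover partial row in $l$ costs at most $2q_i^{2}$ and a single correction ratio near the integer costs at most $q_i$ (via $|\sin(q_ix)/\sin x|\le q_i$), giving $\widetilde\Pi_i\le 2^{b_i}q_i^{3}$. The point is that rounding $k\theta_i$ to a grid step converts $b_i$ perturbed $q_i$-periods into $b_i-1$ exact periods plus one controllable boundary row; your sketch never supplies any mechanism of this kind.
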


\begin{corollary} \label{co_prodb}
For all $N$ with $q_{l} \leq N < q_{l+1}$ we have
$$
\frac{1}{N} \sum^{N}_{n=1} \log |2 \sin (\pi n \alpha)| \leq (\log 2) \left(\frac{1}{q_{l}} + \frac{l}{2^{(l-3)/2}}\right) + 3 \, \frac{\log q_{l}}{q_{l}} \left(\frac{\log q_l}{\log \phi}+1 \right),
$$
where $\phi=(1+\sqrt{5})/2$ and hence
$$
\limsup_{N \rightarrow \infty} \frac{1}{N} \sum^{N}_{n=1} \log |2 \sin (\pi n \alpha)| = 0 =\int_0^1 \log(2 \sin(\pi x)) \rd x .
$$
\end{corollary}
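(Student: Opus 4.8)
The plan is to deduce the Corollary directly from Theorem~\ref{th_prodb} by taking logarithms. That theorem yields
$$
\sum_{n=1}^{N}\log|2\sin(\pi n\alpha)|\;\le\;(\log 2)\sum_{i=0}^{l}b_{i}\;+\;3\sum_{i=0}^{l}\log q_{i},
$$
so, dividing by $N$ and using $N\ge q_{l}$, everything reduces to estimating the two sums on the right by the two groups of terms in the claimed bound. The asymptotic assertion will then follow because each of these terms tends to $0$ as $l\to\infty$, together with one appeal to \eqref{equ_delta_fritz}.

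For $\sum_{i=0}^{l}\log q_{i}$ I would use $q_{0}=1$ and $q_{i}\le q_{l}$, so that $\sum_{i=0}^{l}\log q_{i}=\sum_{i=1}^{l}\log q_{i}\le l\log q_{l}$. Since $q_{i}=a_{i}q_{i-1}+q_{i-2}\ge q_{i-1}+q_{i-2}$, the denominators obey the Fibonacci-type bound $q_{l}\ge\phi^{\,l-1}$, hence $l\le \frac{\log q_{l}}{\log\phi}+1$; combined with $N\ge q_{l}$ this gives
$$
\frac{3}{N}\sum_{i=0}^{l}\log q_{i}\;\le\;\frac{3l\log q_{l}}{q_{l}}\;\le\;3\,\frac{\log q_{l}}{q_{l}}\left(\frac{\log q_{l}}{\log\phi}+1\right),
$$
which is the last term of the bound.

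The delicate part, and the one I expect to be the main obstacle, is the digit sum $\sum_{i=0}^{l}b_{i}$. For the top digit, $b_{l}q_{l}\le N$ forces $b_{l}/N\le 1/q_{l}$, producing the term $(\log 2)/q_{l}$. For $i<l$ I would combine three facts: (i) $b_{i}\le a_{i+1}$ together with $q_{i+1}=a_{i+1}q_{i}+q_{i-1}$ gives $b_{i}q_{i}<q_{i+1}$, i.e.\ $b_{i}<q_{i+1}/q_{i}$; (ii) the recursion yields $q_{j+2}\ge 2q_{j}$, hence both $q_{i}\ge 2^{(i-1)/2}$ and $q_{l}\ge 2^{(l-i-1)/2}q_{i+1}$; (iii) $N\ge q_{l}$. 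Multiplying these estimates gives $b_{i}/N<q_{i+1}/(q_{i}q_{l})\le 2^{-(l-3)/2}$ uniformly in $i=0,\dots,l-1$, and summing the $l$ terms yields $\frac1N\sum_{i=0}^{l-1}b_{i}\le l/2^{(l-3)/2}$. The careful bookkeeping of the parity/floor losses in step (ii), and of the Ostrowski constraint in step (i), is exactly what must be done to land on the stated constant $2^{(l-3)/2}$; a cruder version of the argument still produces a bound of the same shape $l\,2^{-cl}$ with $c>0$, which is all that is needed for the limit.

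It remains to pass to the $\limsup$. As $N\to\infty$ one has $l=l(N)\to\infty$ (since $q_{l}\le N$ and $q_{l}\ge\phi^{\,l-1}$), so every term on the right-hand side of the established inequality tends to $0$, giving $\limsup_{N\to\infty}\frac1N\sum_{n=1}^{N}\log|2\sin(\pi n\alpha)|\le 0$. For the reverse inequality I would invoke \eqref{equ_delta_fritz}, which states that $\sum_{n=1}^{q-1}\log|2\sin(\pi n\alpha)|\to 0$ as $q$ runs through the best approximation denominators: taking $N=q_{l}-1$ and dividing by $N$ shows that this subsequence of averages tends to $0$, so the $\limsup$ is also $\ge 0$. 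Hence it equals $0$, and one finishes by recalling the classical identity $\int_{0}^{1}\log(2\sin(\pi x))\,\rd x=\log 2+\int_{0}^{1}\log\sin(\pi x)\,\rd x=0$.
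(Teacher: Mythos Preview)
Your argument is correct and follows the same overall scheme as the paper: take logarithms in Theorem~\ref{th_prodb}, split into the digit sum $\sum b_i$ and the sum $\sum\log q_i$, and use $N\ge q_l$ together with $q_l\ge\phi^{\,l-1}$ for the latter. The one point where you genuinely diverge is the treatment of $\sum_{i<l} b_i$. The paper first bounds this by $l\max_{i<l}b_i$ and then controls $\max_{i<l}b_i$ through the iterated inequality
\[
q_l\;\ge\;(b_{l-1}b_{l-2}+1)(b_{l-3}b_{l-4}+1)\cdots\;\ge\;2^{(l-3)/2}\max_{0\le i<l}b_i,
\]
obtained by repeatedly feeding $q_j\ge b_{j-1}q_{j-1}+q_{j-2}$ into itself. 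You instead bound each term directly via $b_i<q_{i+1}/q_i$ and the universal doubling $q_{j+2}\ge 2q_j$, giving $b_i/N\le q_{i+1}/(q_iq_l)\le 2^{-(l-3)/2}$ uniformly in $i<l$. Your route is a bit more elementary and sidesteps the pairing in the paper's product inequality; the paper's route packages the estimate into a single inequality about $q_l$. Either way one lands on $(\log 2)\,l/2^{(l-3)/2}$, and your derivation of the $\limsup$ via the upper bound together with \eqref{equ_delta_fritz} (or, equivalently, the lower bound in Theorem~\ref{th_proda}) is exactly what the paper intends.
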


The second part of Corollary~\ref{co_prodb} can also be obtained from \cite[Lemma~4]{h}.\\

In the following we say that a real $\alpha$ is of type $ t \ge 1$ if there is a constant $c > 0$ such that $$\left|\alpha - \frac{p}{q}\right| > c  \frac{1}{q^{1+t}}$$ for all $p,q \in \mathbb{Z}$ with $\gcd(p,q)=1$.

The next result essentially improves a result given in \cite{Knill+Les}. There a bound on $\prod^{N}_{n=1} \left|2 \sin \left(\pi n \alpha \right) \right|$ for $\alpha$ of type $t$ of the form $N^{c N^{1-1/t} \log N}$ instead of our much sharper bound $2^{C N^{1-1/t}}$ was given. Note that our result only holds for $t > 1$, so we cannot obtain the sharp result of Lubinsky \cite{Lub} in the case of $\alpha$ with bounded continued fraction coefficients.

\begin{corollary} \label{co_b}
Assume that $\alpha$ is of type $ t > 1$. Then for some constant $C$ and all $N$ large enough $\prod^{N}_{n=1} \left|2 \sin \left(\pi n \alpha \right) \right|\leq 2^{C N^{1-1/t}}$.
\end{corollary}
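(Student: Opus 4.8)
The plan is to start from the explicit bound of Theorem~\ref{th_prodb} and feed the type condition into it. Taking base-$2$ logarithms in Theorem~\ref{th_prodb},
\[
\log_2 \prod_{n=1}^{N}\left|2\sin(\pi n\alpha)\right| \;\le\; \sum_{i=0}^{l} b_i \;+\; \frac{3}{\log 2}\sum_{i=0}^{l}\log q_i ,
\]
so it suffices to show that the right-hand side is $O\!\left(N^{1-1/t}\right)$; as we will see, the second sum is negligible and the real content is the estimate $\sum_{i=0}^{l} b_i = O\!\left(N^{1-1/t}\right)$.

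First I would translate the type hypothesis into continued-fraction language. Combining the standard estimate $|\alpha-p_i/q_i|<1/(q_iq_{i+1})$ with the type bound $|\alpha-p_i/q_i|>c\,q_i^{-(1+t)}$ gives $q_{i+1}<q_i^{t}/c$ for every $i$, hence $a_{i+1}<q_i^{t-1}/c$; taking $i=l$ yields in addition $q_l>(cN)^{1/t}$, so $l=l(N)\to\infty$, and since $q_l$ is at least the $l$-th Fibonacci number we get $l+1=O(\log N)$. Consequently $\sum_{i=0}^{l}\log q_i\le (l+1)\log q_l\le (l+1)\log N=O\!\left((\log N)^2\right)$, which is $o\!\left(N^{1-1/t}\right)$ because $t>1$.

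For the main sum I would split the index range at $i^\ast$, defined as the largest index with $q_{i^\ast}\le (cN)^{1/t}$. For $i\le i^\ast$ I use $b_i\le a_{i+1}<q_i^{t-1}/c$; since the denominators grow at least geometrically ($q_i\ge 2q_{i-2}$), summing the geometric series of the $q_i^{t-1}$ — which converges precisely because $t>1$ — bounds $\sum_{i\le i^\ast}b_i$ by a constant multiple of $q_{i^\ast}^{t-1}\le (cN)^{(t-1)/t}$, of order $N^{1-1/t}$. For $i^\ast< i\le l$ I use instead the trivial bound $b_i\le N/q_i$ (valid since the $b_iq_i\ge 0$ sum to $N$) together with $q_i>(cN)^{1/t}$ and geometric growth, giving $\sum_{i>i^\ast}b_i\le N\sum_{i>i^\ast}q_i^{-1}$, which is a constant multiple of $N/q_{i^\ast+1}<N/(cN)^{1/t}$, again of order $N^{1-1/t}$. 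Adding the two ranges, $\sum_{i=0}^{l}b_i=O\!\left(N^{1-1/t}\right)$, whence $\log_2\prod_{n=1}^{N}|2\sin(\pi n\alpha)|\le C\,N^{1-1/t}$ for a suitable $C$ and all large $N$, which is the claim.

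The step I expect to be the main obstacle is exactly this bound on $\sum_{i=0}^{l}b_i$: the crude estimate (there are $l+1=O(\log N)$ digits, each at most $c^{-1/t}N^{1-1/t}$) loses a spurious factor $\log N$, and removing it forces one to combine \emph{both} available bounds on each Ostrowski digit — the arithmetic bound $b_i\le a_{i+1}\ll q_i^{t-1}$ and the size bound $b_i\le N/q_i$ — and to balance them at the crossover scale $q_i$ of order $N^{1/t}$. The convergence of the geometric series on the small-$i$ side is where the strict inequality $t>1$ is used; this matches the remark preceding the statement that the argument does not reach the case of bounded partial quotients ($t=1$).
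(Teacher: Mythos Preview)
Your proof is correct and follows the same route as the paper: both start from Theorem~\ref{th_prodb}, convert the type hypothesis into $a_{i+1}<c^{-1}q_i^{t-1}$, dismiss $\sum_i\log q_i=O((\log N)^2)$ as negligible since $t>1$, and reduce everything to $\sum_{i\le l} b_i=O(N^{1-1/t})$. The only difference is in how this last estimate is carried out: the paper isolates the leading digit, combines $b_l q_l\le N$ with $b_l<c^{-1}q_l^{t-1}$ to get $b_l\le c_1 N^{1-1/t}$, and then iterates downward through the remaining digits; you instead split the index range at the crossover scale $q_i\approx (cN)^{1/t}$ and balance the two bounds $b_i<c^{-1}q_i^{t-1}$ (below) and $b_i\le N/q_i$ (above), summing each side as a geometric series. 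Your variant is a clean repackaging of the same idea and makes more explicit both where the geometric growth $q_i\ge 2q_{i-2}$ enters and why the strict inequality $t>1$ is needed.
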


Now we will deal with $\prod^{N}_{n=1} |2 \sin (\pi x_n)|$, where $(x_n)_{n \geq 1}$ is the van der Corput-sequence. The van der Corput sequence (in base 2) is defined as follows: for $n \in \mathbb{N}$ with binary expansion $n=a_0+a_1 2+a_2 2^3+\cdots$ with digits $a_0,a_1,a_2,\ldots \in \{0,1\}$ (of course the expansion is finite) the $n^{{\rm th}}$ element is given as $$x_n=\frac{a_0}{2}+\frac{a_1}{2^2}+\frac{a_2}{2^3}+\cdots$$ (see the recent survey \cite{FKP} for detailed information about the van der Corput sequence). For this sequence, in contrast to the Kronecker sequence, we can give very precise results. We show:

\begin{theorem} \label{th_a}
Let $(x_{n})_{n \geq 1}$ be the van der Corput sequence in base 2. Then
$$
\limsup_{N \rightarrow \infty} \frac{1}{N^{2}} \prod^{N}_{n=1} |2 \sin (\pi x_{n})| = \frac{1}{2 \pi}
$$
and
$$
\liminf_{N \rightarrow \infty} \prod^{N}_{n=1} |2 \sin (\pi x_{n})| = \pi.
$$
\end{theorem}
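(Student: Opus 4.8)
Write $P_N:=\prod_{n=1}^N|2\sin(\pi x_n)|$; all factors are positive for $n\ge1$ since then $x_n\in(0,1)$. The plan is to derive an exact closed form for $P_N$ from the binary expansion of $N+1$ and then read off both one‑sided asymptotics. The starting point is the classical identity
$$\prod_{j=0}^{K-1}\bigl|2\sin(\pi(\theta+j/K))\bigr|=\bigl|2\sin(\pi K\theta)\bigr|,$$
obtained by putting $X=1$ in $X^K-e^{2\pi\mathrm iK\theta}=\prod_{j=0}^{K-1}(X-e^{2\pi\mathrm i(\theta+j/K)})$ and taking moduli; its $\theta=0$ companion is $\prod_{j=1}^{K-1}2\sin(\pi j/K)=K$ (Lemma~\ref{lem_c_fritz}). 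Two elementary features of the van der Corput sequence enter: $\{x_n:0\le n<2^k\}=\{j2^{-k}:0\le j<2^k\}$, and $x_{S+t}=x_S+x_t$ whenever $2^k\mid S$ and $0\le t<2^k$. Decomposing $\{0,1,\dots,N\}$ into the dyadic blocks dictated by $N+1=2^{e_1}+2^{e_2}+\cdots+2^{e_r}$ with $e_1>e_2>\cdots>e_r\ge0$ — the block $\{0,\dots,2^{e_1}-1\}$ contributing $\prod_{j=1}^{2^{e_1}-1}2\sin(\pi j/2^{e_1})=2^{e_1}$, and each later block $\{S_{s-1},\dots,S_{s-1}+2^{e_s}-1\}$ (with $S_{s-1}=2^{e_1}+\cdots+2^{e_{s-1}}$) contributing $2\sin(\pi\theta_s)$, $\theta_s:=2^{e_s}x_{S_{s-1}}=\sum_{p=1}^{s-1}2^{-(e_p-e_s+1)}\in(0,\tfrac12)$, by the displayed identity applied with $\theta=x_{S_{s-1}}$, $K=2^{e_s}$ — one obtains
$$P_N=2^{e_1}\prod_{s=2}^{r}2\sin(\pi\theta_s).$$
(The same formula also follows from the recursions $P_{2M+1}=2P_M$, $P_{2M}=2P_{M-1}\cdot2\sin(\pi x_M/2)$, which come from pairing the indices $2n,2n+1$ and using $2\sin(\pi x_{2n})\cdot2\sin(\pi x_{2n+1})=2\sin(\pi x_n)$.) Writing $\phi_s:=\tfrac12-\theta_s$, so that $2\sin(\pi\theta_s)=2\cos(\pi\phi_s)$, one checks $\theta_{s+1}=2^{-g_s}(\theta_s+\tfrac12)$, $\theta_1=0$, with $g_s:=e_s-e_{s+1}\ge1$.

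I would then evaluate the two extremal subsequences directly from this formula. For $N=2^m$ one has $r=2$, $\theta_2=2^{-m-1}$, so $P_{2^m}=2^{m+1}\sin(\pi/2^{m+1})$, which increases to $\pi$ from below; hence $\liminf_NP_N\le\pi$. For $N=2^m-2$ one has $N+1=2^{m-1}+\cdots+2^0$, all $g_s=1$, hence $\phi_s=2^{-s}$, and the telescoping $2\cos\alpha=\sin(2\alpha)/\sin\alpha$ gives $\prod_{s=2}^{m}2\cos(\pi2^{-s})=1/\sin(\pi/2^m)$, so
$$P_{2^m-2}=\frac{2^{m-1}}{\sin(\pi/2^m)}\qquad\text{and}\qquad\frac{1}{(2^m-2)^2}\,P_{2^m-2}\longrightarrow\frac1{2\pi},$$
giving $\limsup_N\frac1{N^2}P_N\ge\frac1{2\pi}$.

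For the matching bounds: a short induction on $s$ (using $g_s\ge1$, $\phi_s\le\tfrac12$ and the recursion for $\theta_s$) shows $\phi_s\ge2^{-s}$, whence $2\cos(\pi\phi_s)\le2\cos(\pi2^{-s})$ and, by the same telescoping, $P_N\le 2^{e_1}/\sin(\pi/2^r)$. Combining $\sin(\pi/2^r)\ge\frac{\pi}{2^r}\bigl(1-\frac{\pi^2}{6}4^{-r}\bigr)$ with the trivial bound $N+1\ge2^{e_1}+2^{r-1}-1$ and AM--GM (which give $2^{e_1+r}=2\cdot2^{e_1}2^{r-1}\le\tfrac12(N+2)^2$) yields $\frac1{N^2}P_N\le\frac1{2\pi}+o(1)$, where one separates the case that $r$ stays bounded (then $P_N=O(2^{e_1})=O(N)$ and $\frac1{N^2}P_N\to0$) from $r\to\infty$ (then the correction $\frac{\pi^2}{6}4^{-r}$ vanishes); with the previous paragraph this gives $\limsup_N\frac1{N^2}P_N=\frac1{2\pi}$. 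For the liminf, the inequality $2\sin(\pi\theta)\ge4\theta$ on $[0,\tfrac12]$ together with $\theta_s\ge2^{-(g_{s-1}+1)}$ gives $P_N\ge2^{(r-1)+e_r}$, which is $\ge4>\pi$ unless $(r,e_r)\in\{(1,0),(1,1),(2,0)\}$; the only infinite family among these exceptions is $N=2^{e_1}$, already treated, so $\liminf_NP_N\ge\pi$ and hence $=\pi$.

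The bulk of the work is the first step — setting up the dyadic decomposition of $\{0,\dots,N\}$ and using the carry‑free additivity $x_{S+t}=x_S+x_t$ to reach the closed form for $P_N$. The one remaining subtlety is the limsup upper bound: since $\frac1{N^2}P_N\to\frac1{2\pi}$ along $N=2^m-2$ only at rate $1+O(2^{-m})$, that estimate must be run with the exact constant (using $\sin x=x+O(x^3)$ rather than the cruder $\sin x\ge\frac2\pi x$), and the regimes ``$r$ bounded'' and ``$r\to\infty$'' genuinely have to be distinguished.
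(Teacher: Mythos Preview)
Your argument is correct and takes a genuinely different route from the paper. The paper proceeds combinatorially: Lemma~\ref{lem_a} shows that sliding a $0$ in the binary expansion of $n$ towards the less significant end strictly increases $P_n$, and the four cases of Lemma~\ref{lem_b} handle the residual boundary configurations; together these pin down, in each dyadic window $2^s\le n<2^{s+1}$, the handful of candidates $n_1,\dots,n_6$ where $P_n$ can be maximal, and each is then evaluated and compared explicitly. You instead extract a closed form $P_N=2^{e_1}\prod_{s=2}^r 2\sin(\pi\theta_s)$ from the dyadic block decomposition of $\{0,\dots,N\}$ and the carry-free additivity $x_{S+t}=x_S+x_t$, and then bound the whole product at once via the single inductive inequality $\phi_s\ge 2^{-s}$ (upper bound, telescoping to $P_N\le 2^{e_1}/\sin(\pi/2^r)$) and the single convexity inequality $2\sin(\pi\theta)\ge 4\theta$ (lower bound, telescoping to $P_N\ge 2^{\,r-1+e_r}$). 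This bypasses the case analysis of Lemma~\ref{lem_b} entirely, makes the extremal subsequences $N=2^m-2$ and $N=2^m$ visible directly from the formula, and would port readily to base $b\neq 2$. What the paper's approach buys in return is slightly finer information: it actually locates the maximiser of $P_n$ in every dyadic block, not just an upper envelope. The one place your write-up needs a touch more care is the $\limsup$ upper bound: the AM--GM step $2^{e_1+r}\le\tfrac12(N+2)^2$ presumes $r\ge 2$ (harmless, since $r=1$ gives $P_N=N+1$), and the split into ``$r$ bounded'' versus ``$r\to\infty$'' should be phrased as a single $\varepsilon$--$N_0$ argument (fix $R=R(\varepsilon)$ so that $1/(1-\tfrac{\pi^2}{6}4^{-R})<1+\varepsilon$, then treat $r<R$ and $r\ge R$ separately); both are routine to fill in.
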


Finally, we study probabilistic analogues of Weyl products, in order to be able to quantify the typical order of such products for ``random'' sequences and to have a basis for comparison for the results obtained for deterministic sequences in Theorems \ref{th_proda}, \ref{th_prodb} and \ref{th_a}. We will consider two probabilistic models. First we study $$\prod^{N}_{k=1}2\sin(\pi X_{k}),$$ where $(X_{k})_{k \ge 1}$ is a sequence of independent, identically distributed (i.i.d.) random variables in $[0,1]$. The second probabilistic model are random subsequences $(n_{k}\alpha)_{k \ge 1}$ of the Kronecker sequences $(n\alpha)$, where the elements of
$n_{k}$ are selected from $\mathbb{N}$ independently and with probability $\frac{1}{2}$ for each number. This model is frequently used in the theory of random series
(see for example the monograph of Kahane \cite{kah}) and was introduced to the theory of uniform
distribution by Petersen and McGregor \cite{peter} and later extensively studied by Tichy \cite{tichy}, Losert \cite{losert}, and Losert and Tichy \cite{lt}.

\begin{theorem} \label{th51}
Let $(X_{k})_{k \ge 1}$ be a sequence of i.i.d random variables having uniform distribution on $[0,1]$, and let $$P_{N}= \prod^{N}_{k=1}2\sin (\pi X_{k}).$$ 
Then for all $\varepsilon > 0$ we have, almost surely,
$$
P_N \leq \exp\left(\left(\frac{\pi}{\sqrt{6}} + \varepsilon\right) \sqrt{N \log \log N}\right)
$$
for all sufficiently large $N$, and 
$$
P_N \geq \exp  \left(\left(\frac{\pi}{\sqrt{6}}-\varepsilon \right) \sqrt{N \log \log N}\right)
$$
for infinitely many $N$.\\

\end{theorem}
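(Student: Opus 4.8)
\emph{Proof proposal.} The plan is to pass to logarithms, recognise $\log P_N$ as a sum of i.i.d.\ random variables, and then invoke the classical law of the iterated logarithm of Hartman and Wintner. Put $Y_k := \log\big(2\sin(\pi X_k)\big)$. Since the $X_k$ are i.i.d.\ uniform on $[0,1]$, the $Y_k$ are i.i.d., almost surely finite (because $X_k\neq 0$ a.s.), bounded above by $\log 2$, and $\log P_N = \sum_{k=1}^N Y_k$. Thus everything reduces to identifying $\E[Y_1]$ and $\V[Y_1]$ and checking that the latter is finite and strictly positive.

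For the mean, $\E[Y_1] = \int_0^1 \log(2\sin(\pi x))\rd x = 0$, the classical evaluation already recorded in Corollary~\ref{co_prodb}. For the variance I would use the Fourier expansion $\log(2\sin(\pi x)) = -\sum_{n\ge 1} \frac{\cos(2\pi n x)}{n}$, valid for $x\in(0,1)$ and convergent in $L^2[0,1]$ since the coefficient sequence is square-summable; in particular $Y_1\in L^2$. Parseval's identity (tracking the normalisation of the coefficients carefully) then yields
$$
\E[Y_1^2] = \int_0^1 \big(\log(2\sin(\pi x))\big)^2 \rd x = \frac{1}{2}\sum_{n\ge 1}\frac{1}{n^2} = \frac{\pi^2}{12} =: \sigma^2,
$$
so $0<\sigma^2<\infty$. (Alternatively, one checks directly that $\big(\log(2\sin\pi x)\big)^2$ is integrable near the endpoints, where $\log(2\sin\pi x)$ behaves like $\log(2\pi x)$ and $\log(2\pi(1-x))$ respectively, and then computes the constant by another route.)

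With $\E[Y_1]=0$ and $\V[Y_1]=\sigma^2=\pi^2/12$, the Hartman--Wintner law of the iterated logarithm gives, almost surely,
$$
\limsup_{N\to\infty} \frac{\sum_{k=1}^N Y_k}{\sqrt{2\sigma^2 N\log\log N}} = 1, \qquad \liminf_{N\to\infty} \frac{\sum_{k=1}^N Y_k}{\sqrt{2\sigma^2 N\log\log N}} = -1.
$$
Since $\sqrt{2\sigma^2}=\sqrt{\pi^2/6}=\pi/\sqrt6$, this means $\limsup_{N\to\infty}\frac{\log P_N}{\sqrt{N\log\log N}}=\pi/\sqrt6$ almost surely. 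The upper estimate of the theorem then follows because, for any $\varepsilon>0$, the value of this $\limsup$ forces $\log P_N \le (\pi/\sqrt6+\varepsilon)\sqrt{N\log\log N}$ for all sufficiently large $N$; and the lower estimate follows because the same value forces $\log P_N \ge (\pi/\sqrt6-\varepsilon)\sqrt{N\log\log N}$ for infinitely many $N$. (Only the $\limsup$ statement of the LIL is needed.)

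I expect the one genuinely delicate point to be the variance computation: one must justify the $L^2$-convergence of the Fourier series of the unbounded function $\log(2\sin\pi x)$ — equivalently, that this function lies in $L^2[0,1]$ — before applying Parseval, and one must keep the normalisation straight so that the constant comes out as $\pi^2/12$ rather than $\pi^2/6$. The reduction to an i.i.d.\ sum and the application of the LIL are then routine.
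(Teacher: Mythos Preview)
Your proposal is correct and follows essentially the same route as the paper: both take logarithms, recognise $\log P_N$ as an i.i.d.\ sum, compute $\E[Y_1]=0$ and $\E[Y_1^2]=\pi^2/12$, and conclude via the law of the iterated logarithm. The only differences are cosmetic --- you invoke Hartman--Wintner by name (which is in fact the apt version, since $Y_1$ is unbounded below) and you derive the variance via Parseval, whereas the paper simply quotes the value of the integral.
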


\begin{theorem} \label{th52}
Let $\alpha$ be an irrational number with bounded continued fraction coefficients. Let $(\xi_n)_{n \geq 1}=(\xi_n(\omega))_{n \geq 1}$ be a sequence of i.i.d.\ $\{0,1\}$-valued random variables with mean $1/2$, defined on some probability space $(\Omega,\mathcal{A},\mathbb{P})$, which induce a random sequence $(n_k)_{k \geq 1}=(n_k(\omega))_{k \geq 1}$ as the sequence of all numbers 
$\left\{n \geq 1:~ \xi_n = 1 \right\}$, sorted in increasing order. Set 
$$
P_N =  \prod^{N}_{k=1}2\sin (\pi n_k \alpha).
$$
Then for all $\ve > 0$ we have, $\mathbb{P}$-almost surely, 
$$
P_N \leq \exp\left(\left(\frac{\pi}{\sqrt{12}} + \varepsilon\right) \sqrt{N \log \log N}\right)
$$
for all sufficiently large $N$, and 
$$
P_N \geq \exp  \left(\left(\frac{\pi}{\sqrt{12}}-\varepsilon \right) \sqrt{N \log \log N}\right)
$$
for infinitely many $N$
\end{theorem}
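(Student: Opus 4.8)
The plan is to linearize the product and apply Kolmogorov's law of the iterated logarithm to a sum of independent random variables, and then to translate the resulting oscillation statement from the ``range clock'' $M$ to the ``index clock'' $N$. Write $a_n:=\log|2\sin(\pi n\alpha)|$, and for $M\in\mathbb{N}$ set $U_M:=\sum_{n=1}^M\xi_n a_n$ and $N(M):=\sum_{n=1}^M\xi_n$. Since the selected indices $\le M$ are exactly $n_1<\cdots<n_{N(M)}$, one has the exact identity $\log P_{N(M)}=U_M$ for every $M$; in particular $\log P_N=U_{n_N}$. By the strong law of large numbers, $N(M)/M\to1/2$ almost surely, equivalently $n_N/N\to2$, so it suffices to prove a law of the iterated logarithm for $U_M$ and then change the clock.

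The summands $\xi_n a_n$ are independent with $\E[\xi_n a_n]=\tfrac12 a_n$ and $\mathrm{Var}(\xi_n a_n)=\tfrac14 a_n^2$, so $\E[U_M]=\tfrac12\sum_{n\le M}a_n$ and $s_M^2:=\mathrm{Var}(U_M)=\tfrac14\sum_{n\le M}a_n^2$. These two deterministic sums are controlled by classical uniform-distribution facts for the Kronecker sequence. Since $\alpha$ has bounded partial quotients there is $c>0$ with $\{n\alpha\}\in[c/n,1-c/n]\subseteq[c/M,1-c/M]$ for all $n\le M$, so $|a_n|\ll\log n$; on this subinterval $x\mapsto\log(2\sin\pi x)$ and $x\mapsto(\log(2\sin\pi x))^2$ coincide with functions of bounded variation $\ll(\log M)^2$, while $M D_M^*(\{n\alpha\}_{1\le n\le M})\ll\log M$. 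Koksma's inequality (applied to the truncated functions), together with $\int_0^1\log(2\sin\pi x)\rd x=0$ and $\int_0^1(\log(2\sin\pi x))^2\rd x=\pi^2/12$ — the latter by Parseval applied to $\log(2\sin\pi x)=-\sum_{k\ge1}k^{-1}\cos(2\pi kx)$ — then yields
$$
\sum_{n\le M}a_n=O\big((\log M)^2\big)=o(\sqrt{M}),\qquad \sum_{n\le M}a_n^2=\frac{\pi^2}{12}\,M+o(M).
$$
Consequently $\E[U_M]=o(\sqrt{M\log\log M})$ and $s_M^2\sim\tfrac{\pi^2}{48}M$, and since $|\xi_n a_n-\tfrac12 a_n|=\tfrac12|a_n|\ll\log n=o\big((s_n^2/\log\log s_n^2)^{1/2}\big)$, Kolmogorov's boundedness condition holds; applying his law of the iterated logarithm to $U_M-\E[U_M]$ and absorbing the negligible mean gives, almost surely,
$$
\limsup_{M\to\infty}\frac{U_M}{\sqrt{2 s_M^2\log\log s_M^2}}=1,\qquad\text{hence}\qquad \limsup_{M\to\infty}\frac{U_M}{\sqrt{M\log\log M}}=\frac{\pi}{\sqrt{24}}.
$$

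It then remains to pass to the $N$-clock. For the upper bound, fix $\ve>0$: almost surely $U_M\le(\tfrac{\pi}{\sqrt{24}}+\ve)\sqrt{M\log\log M}$ for all large $M$, and since $\log P_N=U_{n_N}$ with $n_N\le(2+\ve)N$ eventually, monotonicity of $M\mapsto\sqrt{M\log\log M}$ gives $\log P_N\le(\tfrac{\pi}{\sqrt{24}}+\ve)\sqrt{(2+\ve)N\log\log N}\,(1+o(1))$; as $\tfrac{\pi}{\sqrt{24}}\sqrt{2}=\tfrac{\pi}{\sqrt{12}}$, letting $\ve\to0$ proves the first assertion. For the lower bound, on the same full-measure event choose $M_j\to\infty$ with $U_{M_j}\ge(\tfrac{\pi}{\sqrt{24}}-\ve)\sqrt{M_j\log\log M_j}$; then $\log P_{N(M_j)}=U_{M_j}$ and $N(M_j)\sim M_j/2\to\infty$, so $\log P_{N(M_j)}\ge(\tfrac{\pi}{\sqrt{24}}-\ve)\sqrt{2}\,\sqrt{N(M_j)\log\log N(M_j)}\,(1+o(1))$, and $\ve\to0$ gives the second assertion along the infinite set $\{N(M_j):j\ge1\}$. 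The main obstacle is the deterministic input of the second paragraph: the two-sided control of $\sum_{n\le M}a_n$ (the results stated above supply only an upper bound) and the asymptotics of $\sum_{n\le M}a_n^2$, both in the presence of the logarithmic singularity of $\log(2\sin\pi x)$ at $0$ and $1$; this is precisely where the hypothesis of bounded type is used, namely to keep $\{n\alpha\}$ at distance $\gg1/M$ from the singularities and to supply $MD_M^*\ll\log M$. The probabilistic step (Kolmogorov's LIL) and the clock change are then routine.
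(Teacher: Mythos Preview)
Your proof is correct and follows essentially the same route as the paper. The only cosmetic difference is that the paper writes $\xi_n=\tfrac12(1+R_n)$ with Rademacher functions $R_n$ and splits $U_M=\tfrac12\sum_{n\le M}a_n+\tfrac12\sum_{n\le M}R_n a_n$, applying Kolmogorov's LIL to the second (symmetric) sum; this is algebraically the same as your step of applying the LIL to $U_M-\E[U_M]$ and then absorbing the mean. Your treatment of the deterministic inputs (two-sided control of $\sum a_n$ via Koksma with truncation at distance $\gg 1/M$ from the singularities, and the asymptotics $\sum a_n^2\sim\tfrac{\pi^2}{12}M$) and of the clock change $M\leftrightarrow N$ is in fact spelled out more carefully than in the paper.
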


\begin{remark}\rm
The conclusion of Theorem \ref{th52} remains valid if $\alpha$ is only assumed to be of finite approximation type (see \cite[Chapter 2, Section 3]{e} for details on this notion).  
\end{remark}

\begin{remark}\rm
It is interesting to compare the conclusions of Theorems \ref{th51} (for purely random sequences) and \ref{th52} (for randomized subsequences of linear sequences) to the results in equations \eqref{tm1} and \eqref{tm2}, which hold for lacunary trigonometric products. The results coincide almost excactly, except for the constants in the exponential term (which can be seen as the standard deviations in a related random system; see the proofs). The larger constant in the lacunary setting comes from an interference phenomenon, which appears frequently in the theory of lacunary functions systems (see for example Kac \cite{kac} and Maruyama \cite{maruyama}). On the other hand, the smaller constant in Theorem \ref{th52} represents a ``loss of mass'' phenomenon, which can be observed in the theory of slowly growing (randomized) trigonometric systems; it appears in a very similar form for example in Berkes \cite{berkes} and Bobkov--G\"otze \cite{bobg}.
\end{remark}

The outline of the remaining part of this paper is as follows. In Section \ref{sect_b_end} we will prove Theorems \ref{th_kh} and \ref{th_a_fritz}, which give estimates of Weyl products in terms of the discrepancy of the numbers $(x_k)_{1 \leq k \leq N}$. In Section \ref{sect_c} we prove the results for Kronecker sequences (Theorems \ref{th_proda} and \ref{th_prodb}), and in Section \ref{sect_d} the results for the van der Corput sequence (Theorem \ref{th_a}). Finally, in Section \ref{sect_prob} we prove the results about probabilistic sequences (Theorems \ref{th51} and \ref{th52}). 

\section{Proofs of Theorems \ref{th_kh} and \ref{th_a_fritz}} \label{sect_b_end}

\begin{proof}[Proof of Theorem \ref{th_kh}]
The Koksma-Hlawka-inequality (see e.g. \cite{e}) states that for any function $g:[0,1] \rightarrow \mathbb{R}$ of bounded variation $V(g)$, any $N$ and numbers $x_1, \dots, x_N \in [0,1]$ we have
$$
\left|\int^{1}_{0} g(x) \rd x - \frac{1}{N} \sum^{N}_{k=1} g\left(x_{k}\right)\right| \leq V(g) \, D^{*}_{N} (x_1, \dots, x_N),
$$
where $D_{N}^{*}$ is the star-discrepancy of $x_{1}, \ldots, x_{N}$. Let $P_{N} := \prod^{N}_{k=1} 2 \sin( \pi x_{k})$ and $$\Sigma_{N} := \log P_{N} = N \log 2 + \sum^{N}_{k=1} \log \sin(\pi x_{k}).$$ For $0 < \varepsilon < \frac{1}{2}$ let
\begin{align*}
f_{\varepsilon} (x) := \left\{ \begin{array}{ll}
\log \sin (\pi \varepsilon) & \mbox{if}~\left\|x\right\| \leq \varepsilon \\
\log \sin (\pi x) & \mbox{otherwise.}\\
\end{array} \right.
\end{align*}
Note, that $\int^{1}_{0} \log \sin (\pi x) \rd x = - \log 2$, hence 
\begin{align*}
\int^{1}_{0} f_{\varepsilon} (x) ~\rd x  = & 2 \varepsilon \log \sin (\pi \varepsilon) + \int^{1}_{0} \log \sin (\pi x) \rd x - 2 \int^{\varepsilon}_{0} \log \sin (\pi x) \rd x \\
 = & 2 \varepsilon \log \sin (\pi \varepsilon) - \log 2 - 2 \int^{\varepsilon}_{0} \log \sin (\pi x) \rd x.
\end{align*}
By partial integration we obtain 
\begin{align*}
\int^{\varepsilon}_{0} \log \sin (\pi x) \rd x  = & \varepsilon \log \sin (\pi \varepsilon) - \int^{\varepsilon}_{0} x \pi \cot (\pi x) \rd x \\
 = & \varepsilon \log \sin (\pi \varepsilon) - \varepsilon - \mathcal{O}(\varepsilon^{3}) 
\end{align*}
(with a positive $\mathcal{O}$-constant for $\varepsilon$ small enough). 
Furthermore, we have
\begin{equation*}
V(f_{\varepsilon}) = \int_0^1 |f_{\varepsilon}'(x)| \rd x = 2 \pi \int_{\varepsilon}^{1/2} \cot(\pi x) \rd x = -2 \log \sin(\pi \varepsilon). 
\end{equation*}

Altogether we have, using the Koksma-Hlawka inequality and since $\log \sin (\pi \varepsilon) = \log (\pi \varepsilon) - \frac{\pi \varepsilon^{2}}{6} - \mathcal{O}(\varepsilon^{4}),$
\begin{align*}
\Sigma_{N}  \leq & N \log 2 + \sum^{N}_{k=1} f_{\varepsilon} \left(x_{k}\right)\\
 \leq & N \log 2 + N \int^{1}_{0} f_{\varepsilon} (x) \rd x + N D_{N}^{*} V(f_{\varepsilon})\\
 =& N\left(2 \varepsilon \log \sin \varepsilon - 2 \int^{\varepsilon}_{0} \log \sin (\pi x) \rd x \right) - 2 N D_{N}^{*} \log \sin (\pi \varepsilon) \\
 = & 2N \int^{\varepsilon}_{0} x \pi \cot (\pi x) \rd x - 2ND_{N}^{*} \log \sin (\pi \varepsilon) \\
 = & 2 N \varepsilon + N \mathcal{O}(\varepsilon^{3}) + 2 N D_{N}^{*} \, (-\log (\pi \varepsilon) + \mathcal{O} (\varepsilon^{2})) \\
 = & 2 N \varepsilon - 2 N D_{N}^{*} \log \pi \varepsilon + N \mathcal{O}(\varepsilon^{2}).
\end{align*}
Hence
$$
P_{N}={\rm e}^{\Sigma_N} \leq {\rm e}^{2 N \varepsilon} \left(\frac{1}{\pi \varepsilon}\right)^{2 N D_{N}^{*}} {\rm e}^{c  \varepsilon^{2} N}
$$
for some constant $c>0$. We choose $\varepsilon = D_{N}^{*}$ and obtain
$$
P_{N} \leq \left(c' \, \frac{N}{N D_{N}^{*}}\right)^{2 ND_{N}^{*}}
$$
For some $c'>0$. Note that $c'$ can be chosen such that $c' < 1$ if $\varepsilon = D_{N}^{*} =o(1)$ for $N \rightarrow \infty$. 
\end{proof}

Next we come to the proof of Theorem \ref{th_a_fritz}. We will need several auxiliary lemmas, before proving the theorem.

\begin{lemma} \label{lem_a_fritz}
Let $D \in \mathbb{Q}$, let $N$ be even such that $N D =: M$ for some integer $M$. Then the $N$-element point set $\widetilde{\omega}$ consisting of the points $$\frac{M}{N}, \frac{M+1}{N}, \ldots, \frac{\frac{N}{2}-1}{N}, \frac{\frac{N}{2}+1}{N}, \frac{\frac{N}{2}+2}{N}, \ldots, \frac{N-M}{N}$$ together with $2 M$ times the point $\frac{1}{2}$ has star-discrepancy
\begin{equation*} %\label{equ_a}
D_{N}^{*} (\omega) = D.
\end{equation*}
If any of these points is moved nearer to $1/2$, then the star-discrepancy of the new point set is larger than $D$. Furthermore, $\widetilde{\omega}$ is the only sequence with these two properties.
\end{lemma}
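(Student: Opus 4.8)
The plan is to reduce everything to the classical closed form for the star-discrepancy of a finite point set: if the $N$ points are listed in increasing order as $y_1\le y_2\le\cdots\le y_N$, then
$$
D_N^* \;=\; \frac{1}{2N}\;+\;\max_{1\le i\le N}\Bigl|\,y_i-\tfrac{2i-1}{2N}\,\Bigr|
$$
(see e.g.\ \cite{e}). Set $\delta_i:=y_i-\tfrac{2i-1}{2N}$, so $D_N^*=\tfrac{1}{2N}+\max_i|\delta_i|$ (the construction requires $1\le M\le N/2$). First I would verify $D_N^*(\widetilde\omega)=D$: sorting $\widetilde\omega$ into its three blocks and substituting into the formula, a short computation gives $\delta_i=\tfrac{2M-1}{2N}$ on the left block, $\delta_i=\tfrac{N-2i+1}{2N}$ (decreasing from $\tfrac{2M-1}{2N}$ to $-\tfrac{2M-1}{2N}$) on the middle block of copies of $\tfrac12$, and $\delta_i=-\tfrac{2M-1}{2N}$ on the right block. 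Thus $\max_i|\delta_i|=\tfrac{2M-1}{2N}$ and $D_N^*(\widetilde\omega)=\tfrac{1}{2N}+\tfrac{2M-1}{2N}=\tfrac MN=D$.

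For the extremality statement I would argue directly with the counting function $A_N$. A point already at $\tfrac12$ cannot be moved nearer to $\tfrac12$, so it suffices to move a left-block point $v=j/N$ (with $M\le j\le N/2-1$) to some $y'\in(v,1-v)$, the right-block case being analogous. In $\widetilde\omega$ one checks that the deficiency $a-A_N(a)/N$ equals $D$ exactly at each $a\in\{M/N,(M+1)/N,\dots,(N/2-1)/N,\tfrac12\}$. After deleting $v$ and inserting $y'$: if $y'\ge a:=\min\{(j+1)/N,\tfrac12\}$, then $A_N(a)$ drops by $1$ and $a-A_N(a)/N=D+\tfrac1N$; if instead $y'<a$, then necessarily $y'\in(j/N,(j+1)/N)$, and the test point $a=y'$ yields $y'-A_N(y')/N=y'-\tfrac{j-M}{N}>\tfrac MN=D$. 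In either case $D_N^*$ of the new set is strictly bigger than $D$. The right-block case runs the same way with the one-sided discrepancy $A_N(a)/N-a$ in place of $a-A_N(a)/N$ (in $\widetilde\omega$ this quantity approaches $D$ as $a$ decreases to any right-block value or to $\tfrac12$).

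For uniqueness, suppose $\omega'=\{y_1\le\cdots\le y_N\}$ has $D_N^*(\omega')=D$ and the extremality property, and argue coordinate-wise in the $\delta_i$. From the discrepancy formula, $|\delta_i|\le\tfrac{2M-1}{2N}$ for every $i$; in particular $y_1\le\tfrac MN<\tfrac12<\tfrac{N-M}{N}\le y_N$, so $\omega'$ has points on both sides of $\tfrac12$. Let $v<\tfrac12$ be a value of $\omega'$, occupying the run of indices $i_1\le i\le i_2$ on which $y_i=v$. Displacing one copy of $v$ slightly towards $\tfrac12$ changes, after re-sorting, only the single coordinate $\delta_{i_2}\mapsto\delta_{i_2}+\varepsilon$; for $D_N^*$ to strictly increase for every small $\varepsilon>0$, and since $|\delta_{i_2}|\le\tfrac{2M-1}{2N}$, we are forced to have $\delta_{i_2}=\tfrac{2M-1}{2N}$, i.e.\ $v=\tfrac{i_2+M-1}{N}$. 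But then $\delta_{i_1}=\tfrac{2(i_2-i_1)+2M-1}{2N}$, and $|\delta_{i_1}|\le\tfrac{2M-1}{2N}$ forces $i_1=i_2$. Hence every value of $\omega'$ below $\tfrac12$ is simple with $y_i=\tfrac{i+M-1}{N}$, and symmetrically every value above $\tfrac12$ is simple with $y_i=\tfrac{i-M}{N}$. Writing $p$ for the number of points below $\tfrac12$, the identity $y_p=\tfrac{p+M-1}{N}<\tfrac12$ gives $p\le N/2-M$, while $|\delta_{p+1}|\le\tfrac{2M-1}{2N}$ together with $y_{p+1}=\tfrac12$ gives $p\ge N/2-M$; hence $p=N/2-M$, and symmetrically $\omega'$ has $N/2-M$ points above $\tfrac12$ and $2M$ points equal to $\tfrac12$. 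Substituting these counts into the formulas above reproduces exactly the three blocks of $\widetilde\omega$, so $\omega'=\widetilde\omega$.

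The first step is routine once the discrepancy formula is at hand, and the extremality step is elementary but requires checking every position of the displaced point relative to the grid $\{k/N\}$ and on both sides of $\tfrac12$. The part I expect to be the main obstacle is the uniqueness argument: the delicate point is combining the bound $|\delta_i|\le\tfrac{2M-1}{2N}$ coming from $D_N^*(\omega')=D$ with the extremality constraints in the presence of repeated coordinates. What makes it tractable is the observation that moving one copy of a repeated value perturbs only the $\delta$-coordinate at the top index of that value's run.
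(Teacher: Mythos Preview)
Your argument is correct and considerably more detailed than what the paper itself provides. The paper's own proof of this lemma consists entirely of a reference to a figure displaying the graph of the discrepancy function $a\mapsto A_N(a)/N-a$ for $\widetilde\omega$; the three assertions are meant to be read off visually from that sawtooth picture, and no further justification is written out. Your route via the closed form $D_N^*=\tfrac{1}{2N}+\max_i\lvert y_i-\tfrac{2i-1}{2N}\rvert$ replaces the picture by an explicit coordinate computation. What this buys you is an honest proof of the uniqueness clause, which the figure alone does not really establish; conversely, the paper's viewpoint makes the first two claims almost immediate, since the graph visibly touches the levels $\pm D$ precisely on the two outer blocks and moving any point towards $\tfrac12$ pushes the graph past one of these barriers.

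One small point worth tightening in your uniqueness argument: you invoke $y_{p+1}=\tfrac12$ without comment. This is easy to justify---if instead $y_{p+1}>\tfrac12$, then by the structure you have already derived for points above $\tfrac12$ one has $y_{p+1}=(p+1-M)/N>\tfrac12$, forcing $p\ge N/2+M$ and contradicting $p\le N/2-M$---but it should be said explicitly.
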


\begin{proof}
See Figure~\ref{bd78}, where the discrepancy function $a \mapsto \frac{A_{N}(a)}{N} -a$ for $a \in [0,1]$ of $\widetilde{\omega}$ is plotted.
\begin{figure}[ht]
\begin{center}
\input{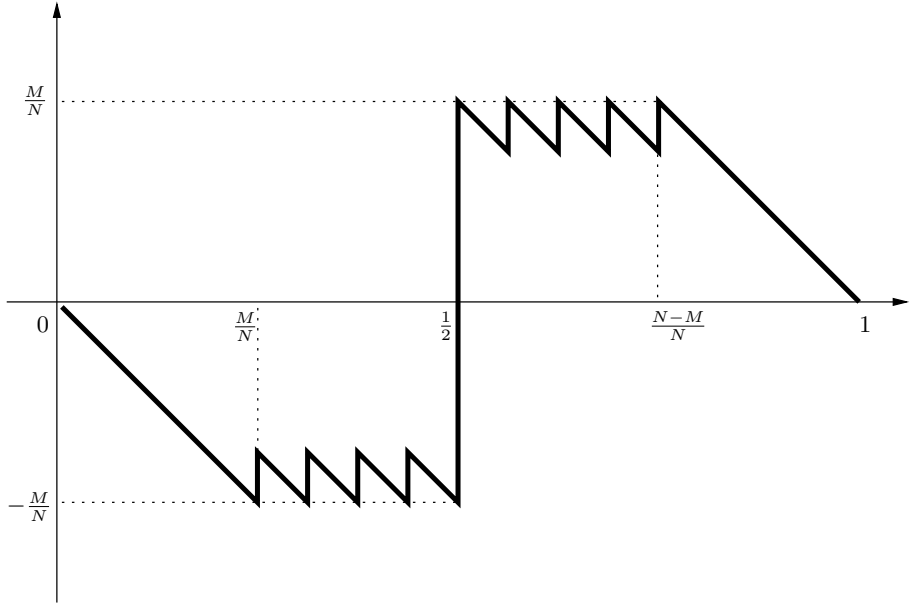}
\caption{Discrepancy function $a \mapsto \frac{A_{N}(a)}{N} -a$ of $\widetilde{\omega}$} \label{bd78}
\end{center}
\end{figure}
\end{proof}

\begin{lemma} \label{lem_b_fritz}
For $\widetilde{\omega}$ as in Lemma~\ref{lem_a_fritz} we have $P_N^{(d_{N})} = P_N(\widetilde{\omega})$.
\end{lemma}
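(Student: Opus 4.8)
The plan is to reduce the supremum defining $P_N^{(d_N)}$ to a coordinate-wise maximization over the order statistics of the point set, and then to recognize $\widetilde\omega$ as exactly the resulting maximizer. Throughout we are in the situation of Lemma~\ref{lem_a_fritz}, so $N$ is even and $M := N d_N$ is an integer. First I would note that both $P_N(\omega)$ and $D_N^*(\omega)$ depend only on $\omega$ as a multiset, so we may restrict the supremum to sorted tuples $y_1 \le y_2 \le \cdots \le y_N$ in $[0,1]$. For such tuples the star-discrepancy admits the classical order-statistics representation
$$
D_N^*(y_1,\dots,y_N) = \max_{1 \le j \le N}\, \max\!\left(\frac{j}{N} - y_j,\ y_j - \frac{j-1}{N}\right),
$$
so the admissibility condition $D_N^*(\omega) \le d_N$ is equivalent to the family of \emph{independent} interval constraints $y_j \in I_j := \left[\frac{j-M}{N},\,\frac{j-1+M}{N}\right] \cap [0,1]$ for $j = 1,\dots,N$.

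Next I would exploit the shape of the factor $2\sin(\pi x)$: it is nonnegative on $[0,1]$, vanishes only at $0$ and $1$, is strictly increasing on $[0,\tfrac12]$ and strictly decreasing on $[\tfrac12,1]$, and attains its maximum at $x=\tfrac12$. Hence the product $\prod_{j=1}^N 2\sin(\pi y_j)$, regarded as a function of $(y_1,\dots,y_N)$ subject only to the box constraints $y_j \in I_j$, is maximized by choosing for each $j$ the point $y_j^\ast \in I_j$ closest to $\tfrac12$. A short inspection of the intervals $I_j$ identifies $y_j^\ast$: it equals the right endpoint $\frac{M+j-1}{N}$ for $j = 1,\dots,\tfrac N2 - M$, it equals $\tfrac12$ for the $2M$ middle indices $j = \tfrac N2-M+1,\dots,\tfrac N2+M$, and it equals the left endpoint $\frac{j-M}{N}$ for $j = \tfrac N2+M+1,\dots,N$. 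This is precisely the sorted list of the points constituting $\widetilde\omega$; in particular $(y_j^\ast)_j$ is nondecreasing, so it is itself an admissible sorted tuple, and by Lemma~\ref{lem_a_fritz} it satisfies $D_N^*(\widetilde\omega) = d_N \le d_N$.

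It then follows that the maximum of $\prod_{j=1}^N 2\sin(\pi y_j)$ over all sorted admissible tuples — which is exactly $P_N^{(d_N)}$ — is attained at $\widetilde\omega$, i.e. $P_N^{(d_N)} = P_N(\widetilde\omega)$. The one step I expect to require care, and the main (mild) obstacle, is the interplay between the box constraints and the ordering constraint: the admissible set is the set of \emph{sorted} tuples with $y_j \in I_j$, which is a proper subset of the product $\prod_j I_j$ over which the coordinate-wise optimization was carried out, so the argument is valid precisely because the unconstrained coordinate-wise optimizer $(y_j^\ast)_j$ turns out to be monotone and hence already lies in the smaller set. I would also remark, to dispose of trivialities, that since $d_N \to 0$ we have $1 \le M < N$ for all large $N$, so every endpoint above lies in $(0,1)$, the optimal product is strictly positive, and point sets with a coordinate at $0$ or $1$ (where a factor of the product vanishes) are irrelevant to the supremum.
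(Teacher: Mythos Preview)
Your proof is correct and rests on the same idea as the paper's --- the unimodality of $2\sin(\pi x)$ on $[0,1]$ forces the maximizer to have every point pushed as close to $\tfrac12$ as the discrepancy constraint allows --- but you implement it differently. The paper argues in two lines: moving any point nearer to $\tfrac12$ increases the product, and by the extremality clause of Lemma~\ref{lem_a_fritz} the set $\widetilde\omega$ is precisely the configuration from which no such move is possible without violating $D_N^*\le d_N$. You instead invoke the order-statistics formula for $D_N^*$ to decouple the constraint into independent intervals $I_j$, optimize each factor separately, and then check that the coordinate-wise optimum is monotone and coincides with $\widetilde\omega$. Your route is more explicit and self-contained (it does not lean on the uniqueness assertion in Lemma~\ref{lem_a_fritz}, and it makes the attainment of the supremum transparent), while the paper's route is shorter but leaves the passage from ``no local improvement'' to ``global maximum'' implicit.
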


\begin{proof}
Let $x_{1}, \ldots, x_{N}$ be any $N$-element point set in $[0,1]$. If one of these points is moved nearer to $1/2$ then this move increases the value $\prod^{N}_{k=1} 2 \sin (\pi x_{k})$. Hence the result immediately follows from Lemma~\ref{lem_a_fritz}.
\end{proof}

\begin{lemma} \label{lem_c_fritz}
For all $N\in \mathbb{N}$ and all $x\in [0,1]$ we have 
\begin{enumerate}[(i)]
\item \quad $\prod^{N-1}_{k=1} 2 \sin(\pi k/N) = N$, and
\item \quad $\prod^{N-1}_{k=0} 2 \sin(\pi (k+x)/N) = 2 \sin(\pi x).$
\end{enumerate}
\end{lemma}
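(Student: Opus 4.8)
The plan is to obtain both identities from the factorization $z^{N}-1 = \prod_{k=0}^{N-1}\left(z-{\rm e}^{2\pi{\rm i}k/N}\right)$ of the polynomial $z^{N}-1$ over $\mathbb{C}$, combined with the elementary relation
$$
1-{\rm e}^{2\pi{\rm i}\theta} = -2{\rm i}\,{\rm e}^{\pi{\rm i}\theta}\sin(\pi\theta),\qquad\text{equivalently}\qquad 2\sin(\pi\theta) = {\rm i}\,{\rm e}^{-\pi{\rm i}\theta}\left(1-{\rm e}^{2\pi{\rm i}\theta}\right),
$$
valid for every real $\theta$.

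For part~(i), I would divide $z^{N}-1$ by $z-1$ to get $\prod_{k=1}^{N-1}\left(z-{\rm e}^{2\pi{\rm i}k/N}\right) = 1+z+\cdots+z^{N-1}$ and then set $z=1$, which yields $\prod_{k=1}^{N-1}\left(1-{\rm e}^{2\pi{\rm i}k/N}\right) = N$. Taking absolute values on the left and using $\left|1-{\rm e}^{2\pi{\rm i}k/N}\right| = 2\left|\sin(\pi k/N)\right| = 2\sin(\pi k/N)$ (the sine being nonnegative because $0<k/N<1$), while noting that the right-hand side $N$ is a positive real number, gives (i).

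For part~(ii), I would put $w:={\rm e}^{2\pi{\rm i}x/N}$, so that ${\rm e}^{2\pi{\rm i}(k+x)/N}=w\,{\rm e}^{2\pi{\rm i}k/N}$. Substituting $z\mapsto 1/w$ in the cyclotomic factorization and clearing denominators gives
$$
\prod_{k=0}^{N-1}\left(1-w\,{\rm e}^{2\pi{\rm i}k/N}\right) = 1-w^{N} = 1-{\rm e}^{2\pi{\rm i}x}.
$$
Writing each factor as $2\sin\!\left(\pi(k+x)/N\right) = {\rm i}\,{\rm e}^{-\pi{\rm i}(k+x)/N}\left(1-w\,{\rm e}^{2\pi{\rm i}k/N}\right)$ and multiplying over $k=0,\dots,N-1$, the product of the factors $\left(1-w\,{\rm e}^{2\pi{\rm i}k/N}\right)$ is $1-{\rm e}^{2\pi{\rm i}x}$ by the line above, whereas the product of the unimodular prefactors equals ${\rm i}^{N}{\rm e}^{-\pi{\rm i}\left(x+(N-1)/2\right)}$, using $\sum_{k=0}^{N-1}(k+x)=\tfrac{N(N-1)}{2}+Nx$. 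Since ${\rm i}^{N}{\rm e}^{-\pi{\rm i}(N-1)/2}={\rm e}^{\pi{\rm i}N/2-\pi{\rm i}(N-1)/2}={\rm i}$, the whole product collapses to ${\rm i}\,{\rm e}^{-\pi{\rm i}x}\left(1-{\rm e}^{2\pi{\rm i}x}\right)={\rm i}\left({\rm e}^{-\pi{\rm i}x}-{\rm e}^{\pi{\rm i}x}\right)=2\sin(\pi x)$, which is (ii). As an alternative route, (i) can also be recovered from (ii) by letting $x\to 0^{+}$ and dividing both sides by the $k=0$ factor $2\sin(\pi x/N)$.

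Everything here is routine; the only step that requires a little care is the phase bookkeeping in part~(ii): one must make sure the accumulated power ${\rm i}^{N}$ and the exponential ${\rm e}^{-\pi{\rm i}(N-1)/2}$ combine to exactly ${\rm i}$, with no leftover sign or phase, so that ${\rm e}^{-\pi{\rm i}x}\left(1-{\rm e}^{2\pi{\rm i}x}\right)$ is turned into $2\sin(\pi x)$ and not its negative.
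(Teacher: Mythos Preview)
Your proof is correct and self-contained. The paper does not actually prove this lemma: it simply declares (i) to be well known (pointing to \cite{mu} for a proof) and identifies (ii) as Formula~1.392 in Gradshteyn--Ryzhik \cite{GR}. By contrast, you derive both identities directly from the cyclotomic factorization $z^{N}-1=\prod_{k=0}^{N-1}\bigl(z-{\rm e}^{2\pi{\rm i}k/N}\bigr)$ together with the relation $2\sin(\pi\theta)={\rm i}\,{\rm e}^{-\pi{\rm i}\theta}\bigl(1-{\rm e}^{2\pi{\rm i}\theta}\bigr)$. Your phase bookkeeping in part~(ii) checks out: the unimodular prefactors indeed collapse to ${\rm i}\,{\rm e}^{-\pi{\rm i}x}$, so that the final product is $2\sin(\pi x)$ with the correct sign. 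The alternative route you mention at the end---recovering (i) from (ii) by dividing out the $k=0$ factor and letting $x\to 0^{+}$---is also valid and worth keeping, since it shows the two statements are not independent.
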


\begin{proof}
Equation (i) is well known. A nice proof can be found for example in \cite{mu}. Equation (ii) is \cite[Formula 1.392]{GR}. 
\end{proof}

\begin{lemma} \label{lem_d_fritz}
There is an $\varepsilon_{0} > 0$ such that for all $\varepsilon < \varepsilon_{0}$ we have
$$
\varepsilon  \log (\pi \varepsilon) - \varepsilon - \varepsilon^{2} \leq \int^{\varepsilon}_{0} \log \sin(\pi x) \rd x \leq \varepsilon \log (\pi \varepsilon) - \varepsilon.
$$
\end{lemma}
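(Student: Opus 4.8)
The plan is to prove Lemma~\ref{lem_d_fritz} by obtaining a good asymptotic expansion of $\int_0^\varepsilon \log\sin(\pi x)\,\mathrm{d}x$ via integration by parts, and then converting the expansion into the stated two-sided inequality valid for all sufficiently small $\varepsilon$. First I would integrate by parts, writing $\int_0^\varepsilon \log\sin(\pi x)\,\mathrm{d}x = \varepsilon\log\sin(\pi\varepsilon) - \int_0^\varepsilon x\cdot \pi\cot(\pi x)\,\mathrm{d}x$, exactly as is done in the proof of Theorem~\ref{th_kh}. (The boundary term at $0$ vanishes because $x\log\sin(\pi x)\to 0$.) Next I would use the expansion $\log\sin(\pi\varepsilon) = \log(\pi\varepsilon) - \tfrac{\pi^2\varepsilon^2}{6} + O(\varepsilon^4)$, so $\varepsilon\log\sin(\pi\varepsilon) = \varepsilon\log(\pi\varepsilon) - \tfrac{\pi^2\varepsilon^3}{6} + O(\varepsilon^5)$.

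For the remaining integral $\int_0^\varepsilon \pi x\cot(\pi x)\,\mathrm{d}x$, I would use the Laurent expansion $\pi t\cot(\pi t) = 1 - \tfrac{\pi^2 t^2}{3} - \tfrac{\pi^4 t^4}{45} - \cdots$, which is valid and (after the constant term $1$) term-by-term negative on $(0,1/2)$; integrating gives $\int_0^\varepsilon \pi x\cot(\pi x)\,\mathrm{d}x = \varepsilon - \tfrac{\pi^2\varepsilon^3}{9} - \cdots$. Combining, $\int_0^\varepsilon \log\sin(\pi x)\,\mathrm{d}x = \varepsilon\log(\pi\varepsilon) - \varepsilon + c\varepsilon^3 + O(\varepsilon^5)$ for an explicit constant $c$ (here $c = -\tfrac{\pi^2}{6}+\tfrac{\pi^2}{9} = -\tfrac{\pi^2}{18}$, but its sign is what matters). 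Since this error term is $O(\varepsilon^3)$, it is in particular dominated by $\varepsilon^2$ for $\varepsilon$ small enough, which immediately yields both inequalities: the upper bound follows because the correction to $\varepsilon\log(\pi\varepsilon)-\varepsilon$ is negative for small $\varepsilon$, and the lower bound follows because this same correction is bounded below by $-\varepsilon^2$ once $\varepsilon$ is small.

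An alternative, slightly cleaner route that avoids invoking the cotangent series is to argue monotonicity directly. Define $g(\varepsilon) := \int_0^\varepsilon \log\sin(\pi x)\,\mathrm{d}x - \big(\varepsilon\log(\pi\varepsilon) - \varepsilon\big)$. Then $g(0)=0$ and $g'(\varepsilon) = \log\sin(\pi\varepsilon) - \log(\pi\varepsilon) = \log\frac{\sin(\pi\varepsilon)}{\pi\varepsilon} < 0$ for $\varepsilon\in(0,1/2)$, so $g(\varepsilon)<0$, giving the upper bound on the whole interval $(0,1/2)$. For the lower bound, one checks that $g'(\varepsilon) = \log\frac{\sin(\pi\varepsilon)}{\pi\varepsilon} \ge -2\varepsilon$ for $\varepsilon$ small (using $\frac{\sin u}{u} \ge 1 - \tfrac{u^2}{6} \ge e^{-u^2/3}$ for small $u$, so $\log\frac{\sin(\pi\varepsilon)}{\pi\varepsilon} \ge -\tfrac{\pi^2\varepsilon^2}{3} \ge -2\varepsilon$ once $\varepsilon \le 6/\pi^2$), hence $g(\varepsilon) = \int_0^\varepsilon g'(t)\,\mathrm{d}t \ge -\int_0^\varepsilon 2t\,\mathrm{d}t = -\varepsilon^2$. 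This gives exactly the claimed $\varepsilon\log(\pi\varepsilon) - \varepsilon - \varepsilon^2 \le \int_0^\varepsilon\log\sin(\pi x)\,\mathrm{d}x$.

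The only mild obstacle is making the $O$-terms genuinely explicit so that the inequalities hold for all $\varepsilon$ below some threshold $\varepsilon_0$ rather than merely asymptotically; the monotonicity argument in the previous paragraph sidesteps this by producing clean elementary bounds, so I would present that version. Everything else is routine calculus: the boundary term vanishing at $0$, the sign of $\log\frac{\sin(\pi\varepsilon)}{\pi\varepsilon}$, and the elementary inequality $\frac{\sin u}{u}\ge e^{-u^2/3}$ for small $u$.
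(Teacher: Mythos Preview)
Your proposal is correct. Your first route---integration by parts followed by the Taylor expansion yielding $\int_0^\varepsilon \log\sin(\pi x)\,\mathrm{d}x = \varepsilon\log(\pi\varepsilon) - \varepsilon - \tfrac{\pi^2}{18}\varepsilon^3 + O(\varepsilon^5)$---is exactly the paper's argument, which simply quotes this expansion and says the lemma follows immediately. Your second route, defining $g(\varepsilon)=\int_0^\varepsilon\log\sin(\pi x)\,\mathrm{d}x - (\varepsilon\log(\pi\varepsilon)-\varepsilon)$ and bounding $g'(\varepsilon)=\log\frac{\sin(\pi\varepsilon)}{\pi\varepsilon}$ directly, is a genuinely different and somewhat cleaner alternative: it avoids the asymptotic $O$-notation entirely and produces an explicit threshold (here $\varepsilon_0 = 6/\pi^2$ works for the lower bound, and the upper bound holds on all of $(0,1/2)$), whereas the paper's Taylor-expansion argument leaves $\varepsilon_0$ implicit. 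Both approaches are short; the paper's is marginally quicker to state, while yours has the advantage of being fully quantitative.
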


\begin{proof}
This follows immediately from the Taylor expansion
$$
\int^{\varepsilon}_{0} \log \sin(\pi x) \rd x - \varepsilon  \log (\pi \varepsilon) = - \varepsilon - \frac{\pi^{2}}{18} \varepsilon^{3} + \mathcal{O}(\varepsilon^{5}).
$$
\end{proof}

\begin{lemma} \label{lem_e_fritz}
There is an $\varepsilon_{0} > 0$ such that for all $\varepsilon < \varepsilon_{0}$ we have
$$
\log (\pi \varepsilon) - \varepsilon \leq \log \sin (\pi \varepsilon) \leq \log (\pi \varepsilon).
$$
\end{lemma}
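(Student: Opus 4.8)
The plan is to factor out the leading-order behaviour by writing
$$
\log \sin(\pi \varepsilon) = \log(\pi \varepsilon) + \log \frac{\sin(\pi \varepsilon)}{\pi \varepsilon},
$$
so that both asserted inequalities reduce to two-sided control of the correction term $R(\varepsilon) := \log\frac{\sin(\pi\varepsilon)}{\pi\varepsilon}$ for small $\varepsilon>0$; concretely, I must show $-\varepsilon \le R(\varepsilon) \le 0$.

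For the upper bound ($R(\varepsilon)\le 0$, equivalently $\log\sin(\pi\varepsilon)\le\log(\pi\varepsilon)$) I would simply invoke the elementary inequality $\sin t < t$, valid for all $t>0$, which gives $\sin(\pi\varepsilon)/(\pi\varepsilon)<1$ and hence $R(\varepsilon)<0$ for every $\varepsilon\in(0,1/\pi)$; since the claimed bound is only required for $\varepsilon$ below some threshold, nothing further is needed here.

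For the lower bound I would use the Taylor expansion $\frac{\sin t}{t}=1-\frac{t^2}{6}+\mathcal{O}(t^4)$ as $t\to 0$, whence
$$
R(\varepsilon) = \log\left(1-\frac{\pi^2\varepsilon^2}{6}+\mathcal{O}(\varepsilon^4)\right) = -\frac{\pi^2\varepsilon^2}{6}+\mathcal{O}(\varepsilon^4).
$$
Alternatively, to avoid the logarithm expansion one can bound $\sin(\pi\varepsilon)\ge \pi\varepsilon\bigl(1-\tfrac{\pi^2\varepsilon^2}{6}\bigr)$ and then use $\log(1-u)\ge -2u$ for $0\le u\le 1/2$, which yields $R(\varepsilon)\ge -\tfrac{\pi^2\varepsilon^2}{3}$. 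In either case $R(\varepsilon)=-c\varepsilon^2+o(\varepsilon^2)$ with $c>0$, and since $\varepsilon^2=o(\varepsilon)$ as $\varepsilon\to 0^+$ we get $R(\varepsilon)\ge -\varepsilon$ for all $\varepsilon$ smaller than some $\varepsilon_0>0$ (for the explicit estimate, any $\varepsilon_0$ with $\tfrac{\pi^2}{3}\varepsilon_0\le 1$ and $\varepsilon_0\le 1/\pi$ does the job). Taking $\varepsilon_0$ as the smaller of the thresholds arising in the two parts completes the argument.

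There is no genuine obstacle here: the only points to watch are the directions of the inequalities and choosing the single constant $\varepsilon_0$ small enough to make the comparison $\sin t<t$ and the "quadratic beats linear" estimate valid simultaneously. In fact this lemma is just the cruder companion of Lemma~\ref{lem_d_fritz} and can be read off from the same Taylor data used there.
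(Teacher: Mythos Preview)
Your proof is correct and follows essentially the same approach as the paper: both reduce the two-sided bound to the Taylor expansion $\log\sin(\pi\varepsilon)-\log(\pi\varepsilon)=-\tfrac{\pi^{2}\varepsilon^{2}}{6}+\mathcal{O}(\varepsilon^{4})$, from which the inequalities are immediate for small $\varepsilon$. The paper simply records this expansion in a single line, whereas you spell out the upper bound via $\sin t<t$ and offer an explicit alternative for the lower bound, but the core idea is identical.
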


\begin{proof}
This follows from
$$
\log \sin (\pi x) - \log (\pi x) = - \frac{\pi^{2}x^{2}}{6} + \mathcal{O}(x^{4}).
$$
\end{proof}

\begin{proof}[Proof of Theorem~\ref{th_a_fritz}]
Let $N d_N = M$ with $M \geq 2$ (for $M=1$ the result is easily checked by following the considerations below) and $\widetilde{\omega}$ as in Lemmas~\ref{lem_a_fritz} and \ref{lem_b_fritz}. Note that $M=M(N)$ depends on $N$. We have, using also equation (i) of Lemma~\ref{lem_c_fritz},
\begin{align*}
P_N(\widetilde{\omega}) %& = & \prod^{N}_{k=1} 2 \sin (\pi x_{k}) \\
 = & \left(\prod^{N-1}_{k=1} 2 \sin \left(\pi \frac{k}{N}\right)\right) \ 2^{2 M-1} \ \left(\prod^{M-1}_{k=1} 2 \sin \left(\pi \frac{k}{N}\right)\right)^{-2}\\
%& = & N \ 2^{2 M-1} \ \frac{1}{2^{2 \left(M-1\right)}} \ \left(\prod^{M-1}_{k=1} \sin \left(\pi \frac{k}{N}\right)\right)^{-2}\\
 = & 2 N\left(\prod^{M-1}_{k=1} \sin \left(\pi \frac{k}{N}\right)\right)^{-2}.
\end{align*}
Note that the function $x \mapsto \log \sin (\pi x)$ is of the form as presented in Figure~\ref{fig_a}.
\begin{figure}
\begin{center}
\includegraphics[angle=0,width=100mm]{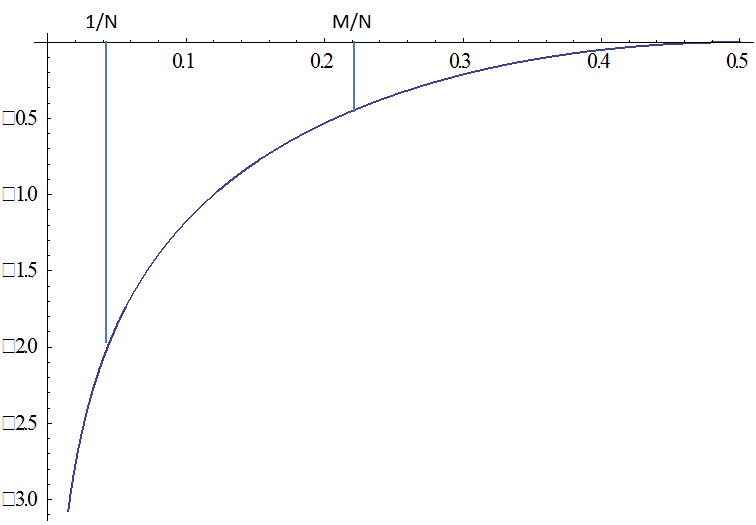}
\caption{The function $\log \sin (\pi x)$}\label{fig_a}
\end{center}
\end{figure}
Hence for $M < \frac{N}{2}$ we have
\begin{align*}
\log \sin \left(\frac{\pi}{N}\right)  + & N \int^{\frac{M-1}{N}}_{\frac{1}{N}} \log \sin (\pi x) \rd x\\
 \leq & \sum^{M-1}_{k=1} \log \sin \left(\pi \frac{k}{N}\right)\\
 \leq & N  \int^{\frac{M-1}{N}}_{\frac{1}{N}} \log \sin (\pi x) \rd x + \log \sin \left(\pi \frac{M-1}{N}\right).
\end{align*}
By Lemma~\ref{lem_d_fritz} for all $M$ with $\frac{M}{N} < \varepsilon_{0}$ for the integral above  we have
\begin{align*}
\lefteqn{N \int^{\frac{M-1}{N}}_{\frac{1}{N}} \log \sin (\pi x) \rd x}\\
&\leq \left(M-1\right) \log \left(\pi \frac{M-1}{N}\right) - (M-1) - \log \left(\frac{\pi}{N}\right) + 1 + \frac{1}{N},
\end{align*}
and hence, using also Lemma~\ref{lem_e_fritz},
\begin{align*}
\sum^{M-1}_{k=1} \log \sin \left(\pi \frac{k}{N} \right)  \leq & (M-1)  \log \left(\frac{\pi}{{\rm e}} \frac{M-1}{N}\right) - \log \pi +\log N + 1\\
&  + \frac{1}{N} + \log (M-1) - \log N + \log \pi \\
 \le & (M-1)  \log \left(\frac{\pi}{{\rm e}} \frac{M-1}{N}\right) + \log (M-1) + 2.
\end{align*}
This gives 
$$
\left(\prod^{M-1}_{k=1} \sin \left(\pi \frac{k}{N}\right)\right)^{2} =  {\rm e}^{2 \sum^{M-1}_{k=1} \log \sin (\pi k/N)} \leq {\rm e}^{4} (M-1)^{2} \left(\frac{\pi}{{\rm e}} \frac{M-1}{N}\right)^{2 (M-1)},
$$
and consequently
\begin{align*} %\label{equ_c_fritz}
P_N(\widetilde{\omega})  \geq & 2N \frac{1}{{\rm e}^{4}} \frac{1}{(M-1)^2}  \left(\frac{{\rm e}}{\pi} \frac{N}{M-1}\right)^{2(M-1)} =  \frac{2 \pi^2}{{\rm e}^6} \frac{1}{N}  \left(\frac{{\rm e}}{\pi} \frac{N}{M-1}\right)^{2 M}\nonumber\\
 \ge & \frac{2 \pi^2}{{\rm e}^6} \frac{1}{N}  \left(\frac{{\rm e}}{\pi} \frac{1}{d_N}\right)^{2 N d_N}.
\end{align*}
This proves assertion b) of Theorem~\ref{th_a_fritz}.\\

On the other hand we have
\begin{align*}
\lefteqn{N  \int^{\frac{M-1}{N}}_{\frac{1}{N}} \log \sin (\pi x) \rd x}\\
&\geq (M-1) \log \left(\pi \frac{M-1}{N}\right) -(M-1) - \frac{(M-1)^{2}}{N} - \log \left(\frac{\pi}{N}\right)+1,
\end{align*}
and hence
\begin{align*}
\sum^{M-1}_{k=1} \log \sin \left(\pi \frac{k}{N}\right)  \geq & (M-1) \log \left(\frac{\pi}{{\rm e}} \frac{M-1}{N}\right) \\
&- \frac{(M-1)^{2}}{N} - \log \pi + \log N + 1 + \log \pi - \log N - \frac{1}{N}\\
 = & (M-1) \log \left(\frac{\pi}{{\rm e}} \frac{M-1}{N}\right) + 1-\frac{1}{N} - \frac{(M-1)^{2}}{N}.
\end{align*}
This gives 
$$
\left(\prod^{M-1}_{k=1} \sin \left(\pi \frac{k}{N}\right)\right)^{2} =  {\rm e}^{2 \sum^{M-1}_{k=1} \log \sin (\pi k/N)} \geq \frac{1}{{\rm e}^{\frac{2 (M-1)^{2}}{N}}}  \left(\frac{\pi}{{\rm e}} \frac{M-1}{N}\right)^{2(M-1)},
$$
and consequently
\begin{equation} \label{equ_d_fritz}
P_N \left(\widetilde{\omega}\right) \leq 2 N {\rm e}^{2 \frac{(M-1)^{2}}{N}}  \left(\frac{{\rm e}}{\pi} \frac{N}{M-1}\right)^{2 (M-1)}.
\end{equation}

It remains to show that for all $\varepsilon > 0$ there are $c(\varepsilon)$ and $N(\varepsilon)$ such that for all $N \geq N(\varepsilon)$ the right hand side of \eqref{equ_d_fritz} is at most $c(\varepsilon) \frac{1}{N} ((\tfrac{{\rm e}}{\pi} + \varepsilon ) \frac{N}{M})^{2 M}$.

To this end let $B(\varepsilon)$ be large enough such that for all $M > B(\varepsilon)$ we have $(M-1)^{1/M}  \frac{M}{M-1} < 1 + \frac{\pi}{2 {\rm e}} \varepsilon$. Furthermore, let $N(\varepsilon)$ be large enough such that for all $N \geq N(\varepsilon)$ the value $\frac{M}{N} = d_{N}$ is so small such that $${\rm e}^{\frac{M-1}{N}} < \frac{1+\frac{\pi}{{\rm e}} \varepsilon}{1 + \frac{\pi}{2 {\rm e}} \varepsilon}.$$ Then for all $M > B(\varepsilon)$ and all $N > N(\varepsilon)$ we have
\begin{align*}
\lefteqn{2 N {\rm e}^{2 \frac{\left(M-1\right)^{2}}{N}} \left(\frac{{\rm e}}{\pi} \frac{N}{M-1}\right)^{2 (M-1)}}\\
& \le  \frac{2 \pi^2}{{\rm e}^2} \frac{\left(M-1\right)^{2}}{N} \left(\frac{{\rm e}}{\pi} \, {\rm e}^{\frac{M-1}{N}} \frac{M}{M-1} \frac{N}{M} \right)^{2 M}\\
& =  \frac{2 \pi^2}{{\rm e}^2 N} \left(\frac{{\rm e}}{\pi} \, {\rm e}^{\frac{M-1}{N}} \, (M-1)^{\frac{1}{M}} \, \frac{M}{M-1} \, \frac{N}{M}\right)^{2 M}\\
& \leq \frac{2 \pi^2}{{\rm e}^2 N} \left(\left(\frac{{\rm e}}{\pi} + \varepsilon \right)  \frac{N}{M}\right)^{2 M}.
\end{align*}
If $M \leq B(\varepsilon)$, then the penultimate expression can be estimated by 
\begin{align*}
\lefteqn{\frac{2 \pi^2}{{\rm e}^2N} \left(\frac{{\rm e}}{\pi} \, {\rm e}^{\frac{M-1}{N}} \, (M-1)^{\frac{1}{M}} \, \frac{M}{M-1} \, \frac{N}{M}\right)^{2 M}}\\
& \leq  \left(\max_{M\leq B(\varepsilon)}\left(\frac{2 \pi^2}{{\rm e}^2} \, {\rm e}^{2 M (M-1)} (M-1)^2 \left(\frac{M}{M-1}\right)^{2 M}\right)\right)  \frac{1}{N} \left(\frac{{\rm e}}{\pi} \frac{N}{M}\right)^{2 M}\\
&=  c(\varepsilon)  \frac{1}{N} \left(\frac{{\rm e}}{\pi} \frac{N}{M}\right)^{2 M},
\end{align*}
where $$c(\varepsilon):=\max_{M\leq B(\varepsilon)}\left(\frac{2 \pi^2}{{\rm e}^2} \, {\rm e}^{2 M \left(M-1\right)} \left(M-1\right)^{2} \left(\frac{M}{M-1}\right)^{2 M}\right).$$
This implies the desired result.
\end{proof}

\section{Proofs of the results for Kronecker sequences} \label{sect_c}

\begin{proof}[Proof of Theorem~\ref{th_proda}]
Let $\alpha = \frac{p}{q} + \theta$ with $0 < \theta < \frac{1}{q \cdot q^{+}}$, where $q^{+}$ is the best approximation denominator following $q$. The case of negative $\theta$ can be handled quite analogously. There is exactly one of the points $\{k \alpha\}$ for $k=1, \dots, q-1$ in each interval $[\frac{m}{q}, \frac{m+1}{q})$ for $m=1, \ldots, q-1$. Note that the point in the interval $[\frac{q-1}{q}, 1)$ is the point $\left\{q^{-} \alpha\right\}$, where $q^{-}$ is the best approximation denominator preceding $q$. We have
$$
\left\{q^{-} \alpha \right\} = \frac{q-1}{q} + q^{-} \theta \leq \frac{q-1}{q} + \frac{q^{-}}{q \cdot q^{+}} < \frac{q-1}{q} + \frac{1}{2 q} = \frac{q- \frac{1}{2}}{q}.
$$
Hence, on the one hand (by equation (i) of Lemma~\ref{lem_c_fritz}),
$$
\prod^{q-1}_{n=1} |2 \sin( \pi n \alpha)| \leq \left(\prod^{q-1}_{n=2} 2 \sin\left( \pi \frac{n}{q} \right)\right) \, 2 \sin\frac{\pi}{2} = \frac{2 q}{2 \sin (\pi/q)} \leq \frac{q^2}{2}.
$$
On the other hand
\begin{align*}
\prod^{q-1}_{n=1} |2 \sin (\pi n \alpha)| \geq & \left(\prod^{q-1}_{n=1} 2 \sin \left(\pi \frac{n}{q}\right)\right) \ \frac{1}{2 \sin(\pi \tfrac{\lfloor q/2\rfloor}{q})} \ 2 \sin (\pi q^{-} \alpha) \\
\geq & q \, \sin\left(\pi \frac{q-1/2}{q}\right) = q \, \sin \frac{\pi}{2 q} \geq 1.
\end{align*}
\end{proof}

\begin{proof}[Proof of Theorem~\ref{th_prodb}]
Let $N_{i} := b_{l}q_{l} + b_{l-1}q_{l-1} + \cdots + b_{i+1} q_{i+1}$ for $i=0, \ldots, l-1$ and $N_{l} := 0$. Then 
$$
\prod^{N}_{n=1} \left|2 \sin( \pi n \alpha) \right| = \prod^{l}_{i=0} \prod^{N_{i} + b_{i}q_{i}}_{n=N_{i}+1} \left|2 \sin (\pi n \alpha) \right|.
$$
We consider 
$$
\Pi_{i} := \prod^{N_{i}+b_{i}q_{i}}_{n=N_{i}+1} \left|2 \sin (\pi n \alpha) \right|.
$$
Let $\alpha := \frac{p_{i}}{q_{i}} + \theta_{i}$ with, say, $\frac{1}{2 q_{i} q_{i+1}} < \theta_{i} < \frac{1}{q_{i} q_{i+1}}$. (The case of negative $\theta_{i}$ is handled quite analogously.)

Let $n=N_{i} + dq_{i} + k$ for some $0 \leq d < b_{i}$ and $1 \leq k \leq q_{i}$, then, with $\kappa:= \kappa_{i} := \{N_{i} \alpha \} \pmod{\frac{1}{q_{i}}}$ and $\tilde{\theta}_i:=q_i \theta_i$
we have
\begin{equation} \label{equ_proda}
\left\{n \alpha \right\} = \left\{N_{i} \alpha + k \frac{p_{i}}{q_{i}} + (dq_{i}+k) \theta_{i} \right\}= \left\{\kappa + \frac{l(k)}{q_{i}} + d \tilde{\theta}_i +k \theta_{i}\right\}
\end{equation}
for some $l(k)\in \{0,1,\ldots, q_i -1\}$. Since $0 < k \theta_{i} + dq_{i} \theta_{i} \leq \frac{a_{i+1}q_{i}}{q_{i+1}q_{i}} < \frac{1}{q_{i}}$, for given $d$ there is always exactly one point $\left\{n \alpha\right\}$ in the interval $[\kappa + \frac{l}{q_{i}} , \kappa + \frac{l+1}{q_{i}}) =: I_{l}$ for each $l=0,\ldots,q_{i}-1$ (the interval taken modulo one).

We replace now the points $\left\{n \alpha\right\}$ by new points, namely: 

\begin{itemize}
\item if $\left\{n \alpha \right\} \in I_{l}$ with $\kappa + \frac{l}{q_{i}} \geq\frac{1}{2}$ then in the representation \eqref{equ_proda} of $\left\{n \alpha \right\}$ we replace $k \theta_{i}$ by $0$, unless $l=q_{i}-1$.

\item if $\left\{n \alpha\right\} \in I_{l}$ with $\kappa + \frac{l+1}{q_{i}} < \frac{1}{2}$ then in the representation \eqref{equ_proda} of $ \left\{n \alpha \right\}$ we replace $k \theta_{i}$ by $\tilde{\theta}_{i}.$

\item if $\left\{ n \alpha \right\} \in I_{l_{0}}$, where $l_{0}$ is such that $\kappa + \frac{l_{0}}{q_{i}} < \frac{1}{2} \leq \kappa + \frac{l_{0}+1}{q_{i}}$ then
\begin{itemize}
\item for the $d$ such that $\kappa + \frac{l_{0}}{q_{i}} + d \tilde{\theta}_{i} \geq \frac{1}{2}$ in the representation \eqref{equ_proda} of $\left\{n \alpha\right\}$ we replace $k \theta_{i}$ by $0$, 
\item for the $d$ such that $\kappa + \frac{l_{0}}{q_{i}} + (d+1) \tilde{\theta}_{i} < \frac{1}{2}$ in the representation \eqref{equ_proda} of $\left\{n \alpha \right\}$ we replace $k \theta_{i}$ by $\tilde{\theta}_{i}$, 
\item for the single $d_{0}$ such that $\kappa + \frac{l_{0}}{q_{i}} + d_{0} \tilde{\theta}_{i} < \frac{1}{2} \leq \kappa + \frac{l_{0}}{q_{i}} + \left(d_{0}+1\right)\tilde{\theta}_i$ we replace $\left\{n \alpha\right\}$ by~$\frac{1}{2}$.
\end{itemize}

\item if $\left\{n \alpha \right\} \in I_{l}$ with $l=q_{i}-1$, then
\begin{itemize}
\item for the $h$ such that $\kappa + \frac{q_{i}-1}{q_{i}} + h \tilde{\theta}_{i} \geq 1$ in the representation \eqref{equ_proda} of $\left\{n\alpha \right\}$ we replace~$k \theta_{i}$~by~$\tilde{\theta}_{i}$,
\item for the $h$ such that
$\kappa + \frac{q_{i}-1}{q_{i}} + (h+1) \tilde{\theta}_{i} \leq 1$ in the representation \eqref{equ_proda} of $\left\{n \alpha \right\}$ we replace $k \theta_{i}$ by $0$, 
\item for the single $h_{0}$ such that $\kappa + \frac{q_{i}-1}{q_{i}} + h_{0} \tilde{\theta}_{i} < 1 < \kappa + \frac{q_{i}-1}{q_{i}} + \left(h_{0}+1\right) \tilde{\theta}_{i}$ we replace in the representation \eqref{equ_proda} of $\left\{ n \alpha \right\}$ the $k \theta_{i}$ by $0$ if $g(\kappa + \frac{q_{i}-1}{q_{i}} + h_{0} \tilde{\theta}_{i}) \geq g(\kappa + \frac{q_{i}-1}{q} + (h_{0}+1) \tilde{\theta}_{i})$ and by $\tilde{\theta}_{i}$ otherwise, where here and in the following we use the notation $g(x) := \left|2 \sin \pi x \right|$. Let the second be the case, the other case is handled quite analogously.
\end{itemize}
\end{itemize}

Using the new points instead of the $\left\{n \alpha\right\}$ by construction we obtain an upper bound $\widetilde{\Pi}_{i}$ for $\Pi_{i}$. %For ease of notation set $g(x) := \left|2 \sin \pi x \right|$. 
Then
\begin{align*}
\widetilde{\Pi}_{i} = &  g(\kappa + \tilde{\theta}_{i} ) g(\kappa+ 2 \tilde{\theta}_{i}) \cdots g(\kappa+b_{i} \tilde{\theta}_{i})\\
& \times 
g(\kappa + \tfrac{1}{q_{i}}+ \tilde{\theta}_{i})  g(\kappa+\tfrac{1}{q_{i}} + 2 \tilde{\theta}_{i}) \cdots  g(\kappa + \tfrac{1}{q_{i}}+b_{i}\tilde{\theta}_{i}) \\
& 
\vdots \\
& \times 
g(\kappa + \tfrac{l_{0}-1}{q_{i}} + \tilde{\theta}_{i}) g(\kappa + \tfrac{l_{0}-1}{q_{i}} + 2 \tilde{\theta}_{i}) \cdots g(\kappa + \tfrac{l_{0}-1}{q_{i}} + b_{i} \tilde{\theta}_{i}) \\
& \times g(\kappa+\tfrac{l_{0}}{q_{i}} + \tilde{\theta}_{i}) \cdots g(\kappa + \tfrac{l_{0}}{q_{i}} + d_{0} \tilde{\theta}_{i})  g(\tfrac{1}{2}) \\
& \hspace{1cm}\times g(\kappa + \tfrac{l_{0}}{q_{i}} + (d_{0}+1)\tilde{\theta}_{i}) \cdots  g(\kappa + \tfrac{l_{0}}{q_{i}} + (b_{i}-1)\tilde{\theta}_{i}) \\ 
& \times g(\kappa + \tfrac{l_{0}+1}{q_{i}})  g(\kappa + \tfrac{l_{0}+1}{q_{i}} + \tilde{\theta}_{i}) \cdots g(\kappa + \tfrac{l_{0}+1}{q_{i}} + (b_{i}-1)\tilde{\theta}_{i})\\
& \vdots \\
& \times g(\kappa + \tfrac{q_{i}-2}{q_{i}}) g(\kappa + \tfrac{q_{i}-2}{q_{i}} + \tilde{\theta}_{i}) \cdots g(\kappa + \tfrac{q_{i}-2}{q_{i}} + (b_{i}-1)\tilde{\theta}_{i}) \\
& \times 
g(\kappa + \tfrac{q_{i}-1}{q_{i}}) \cdots g(\kappa + \tfrac{q_{i}-1}{q_{i}} + (h_{0}-1) \tilde{\theta}_{i}) g(\kappa + \tfrac{q_{i}-1}{q_{i}} + (h_{0} + 1) \tilde{\theta}_{i}) \\
& \hspace{1cm}\times 
g(\kappa + \tfrac{q_{i}-1}{q_{i}} + (h_{0}+2)  \tilde{\theta}_{i}) \cdots  g(\kappa + \tfrac{q_{i}-1}{q_{i}} + b_{i}\tilde{\theta}_{i}).
\end{align*}
Hence
\begin{align*}
\widetilde{\Pi}_{i} = &  \left(\prod^{b_{i}-1}_{d=1} \prod^{q_{i}-1}_{l=0} g \left(\kappa + \frac{l}{q_{i}} + d \tilde{\theta}_{i}\right)\right)  \frac{g(\frac{1}{2})}{g(\kappa + \frac{q_{i}-1}{q_{i}} + h_{0} \tilde{\theta}_{i})} \\
& \times \left(\prod^{l_{0}-1}_{l=0} g \left(\kappa + \frac{l}{q_{i}} + b_{i}\tilde{\theta}_{i}\right)\right)  \prod^{q_{i}-1}_{l=l_{0}+1} g \left(\kappa + \frac{l}{q_{i}}\right).
\end{align*}
By equation (ii) of Lemma~\ref{lem_c_fritz} we have
$$
\prod^{q_{i}-1}_{l=0} g \left(\kappa + \frac{l}{q_{i}} + d \tilde{\theta}_{i} \right) = 2 |\sin (\pi q_{i} (\kappa + d \tilde{\theta}_{i}))| \leq 2
$$
and hence $$\prod^{b_{i}-1}_{d=1} \prod^{q_{i}-1}_{l=0} g \left(\kappa + \frac{l}{q_{i}} + d \tilde{\theta}_{i}\right) \le 2^{b_i-1} |\sin (\pi q_{i} (\kappa + h_{0} \tilde{\theta}_{i}))|.$$
Note that $b_{i} \tilde{\theta}_i <  \frac{a_{i+1}}{q_{i+1}} < \frac{1}{q_{i}}$ and therefore also $\kappa + d \tilde{\theta}_{i} < \frac{2}{q_{i}}$ always. Hence
\begin{align*}
\lefteqn{\left(\prod^{l_{0}-1}_{l=0} g \left(\kappa + \frac{l}{q_{i}} + b_{i}\tilde{\theta}_{i}\right)\right) \prod^{q_{i}-1}_{l=l_{0}+1} g \left(\kappa + \frac{l}{q_{i}}\right)}\\
& \leq g \left(\frac{2}{q_{i}}\right)  g \left(\frac{3}{q_{i}}\right) \cdots g \left(\frac{\lfloor q_{i}/2\rfloor}{q_{i}}\right) g \left(\frac{1}{2}\right)^{2}  g \left(\frac{\lfloor q_{i}/2\rfloor+1}{q_{i}}\right)  \cdots g \left(\frac{q_{i}-1}{q_{i}}\right)\\
&= \left(\prod^{q_{i}-1}_{l=1} 2 \sin\left( \pi \frac{l}{q_{i}}\right)\right)  \frac{4}{\sin(\pi/q_{i})}= \frac{4q_{i}}{\sin( \pi/q_{i})} \leq 2 q_{i}^{2}.
\end{align*}
Hence 
\begin{eqnarray*}
\widetilde{\Pi}_{i} \leq 2^{b_{i}-1} \, \frac{2 |\sin (\pi q_{i} (\kappa + h_{0} \tilde{\theta}_{i}))|
}{2 |\sin(\pi (\kappa + \tfrac{q_{i}-1}{q_{i}} + h_{0}\tilde{\theta}_{i}))|} \, 2 q_{i}^{2}.% \leq 2^{b_{i}} q_{i}^{3}
\end{eqnarray*}
We have
$$\frac{|\sin (\pi q_{i} (\kappa + h_{0} \tilde{\theta}_{i}))|
}{|\sin(\pi (\kappa + \tfrac{q_{i}-1}{q_{i}} + h_{0}\tilde{\theta}_{i}))|}=\frac{|\sin (\pi q_{i} (\kappa +\tfrac{q_i -1}{q_i}+ h_{0} \tilde{\theta}_{i}))|
}{|\sin(\pi (\kappa + \tfrac{q_{i}-1}{q_{i}} + h_{0}\tilde{\theta}_{i}))|} \le q_i,$$ since $|\sin(nx)/\sin x| \le n$ for $n \in \mathbb{N}$. Hence $$\widetilde{\Pi}_{i} \leq  2^{b_{i}} q_{i}^{3}$$ and therefore
$$
\prod^{N}_{n=1} |2 \sin (\pi n \alpha)| \leq \prod^{l}_{i=0} 2^{b_{i}} q_{i}^{3},
$$
as desired.
\end{proof}

\begin{proof}[Proof of Corollary~\ref{co_prodb}]
By Theorem~\ref{th_prodb} we have
\begin{align*}
\frac{1}{N} \sum^{N}_{n=1} \log |2 \sin (\pi n \alpha)| \leq & (\log 2) \frac{b_{0} + \cdots + b_{l}}{b_{0} q_{0} + \cdots + b_{l} q_{l}} +3  \frac{\log q_{1} + \cdots + \log q_{l}}{b_{0} + b_{1}q_{1} + \cdots + b_{l}q_{l}} \\
\leq & (\log 2) \left(\frac{1}{q_{l}} + \frac{l \max_{0 \leq i < l}b_{i}}{q_{l}}\right) + 3 \, \frac{l  \log q_{l}}{q_{l}}.
\end{align*}
We have 
\begin{align*}
q_l  \ge b_{l-1} q_{l-1}+q_{l-2}
 \ge  b_{l-1} b_{l-2} q_{l-2}+b_{l-1} q_{l-3}+q_{l-2}
 \ge  (b_{l-1}b_{l-2}+1) q_{l-2}.
\end{align*}
By iteration we obtain
\begin{eqnarray*}
q_l \ge (b_{l-1}b_{l-2}+1)(b_{l-3}b_{l-4}+1) \cdots (b_1 b_0+1)\ge  2^{\frac{l}{2}-1}  \max_{0 \leq i < l}b_{i}
\end{eqnarray*}
if $l$ is even and
$$q_l \ge (b_{l-1}b_{l-2}+1)(b_{l-3}b_{l-4}+1) \cdots (b_2 b_1+1) q_1 \ge 2^{\frac{l-3}{2}}  \max_{0 \leq i < l}b_{i}$$
if $l$ is odd. With these estimates we get
\begin{eqnarray*}
\frac{1}{N} \sum^{N}_{n=1} \log |2 \sin (\pi n \alpha)|  \leq (\log 2) \left(\frac{1}{q_{l}} + \frac{l}{2^{(l-3)/2}}\right) + 3 \, \frac{l  \log q_{l}}{q_{l}}.
\end{eqnarray*}
Note that $q_l \ge \phi^{l-1}$ and hence $l\le \frac{\log q_l}{\log \phi}+1$, where $\phi=(1+\sqrt{5})/2$. Hence
\begin{eqnarray*}
\frac{1}{N} \sum^{N}_{n=1} \log |2 \sin (\pi n \alpha)| \leq (\log 2) \left(\frac{1}{q_{l}} + \frac{l}{2^{(l-3)/2}}\right) + 3 \, \frac{\log q_{l}}{q_{l}} \left(\frac{\log q_l}{\log \phi}+1 \right).
\end{eqnarray*}
\end{proof}

\begin{proof}[Proof of Corollary~\ref{co_b}]
Since $\alpha$ is of type $t >1$ we have
$$
\frac{c}{q_{i}^{1+t}} < \left| \alpha - \frac{p_{i}}{q_{i}}\right| < \frac{1}{a_{i+1}q_{i}^{2}}
$$
and hence $b_{i} \leq a_{i+1} < q_{i}^{t-1}/c$. Especially we have the following:
Let $b_{l} := q_{l}^{\gamma}$, then, because of
$$
q_{l}^{\gamma+1} = b_{l} q_{l} \leq N < \left(b_{l}+1\right)q_{l} \leq 2 q_{l}^{\gamma+1},
$$
we have
$$
b_{l} = q_{l}^{\gamma} \leq N^{\frac{\gamma}{\gamma+1}} \leq c_{1} N^{1-1/t}.
$$
Hence the bound from Theorem~\ref{th_prodb} can be estimated by
\begin{eqnarray*}
\prod^{l}_{i=0} 2^{b_{i}} q_{i}^{3} & \leq &  2^{b_{l}}  \left(\prod^{l}_{i=0} q_{i}^{3} \right) \prod^{l-1}_{i=0} 2^{b_{i}}\\
& \leq & 2^{c_{1} N^{1-1/t}} N^{3 \left(l+1\right)} \prod^{l-1}_{i=0} 2^{c_{1}  N^{\left(1-1/t\right)  \left(1/t\right)^{i}}} \\
& \leq & 2^{c_{2} N^{1-1/t}}  N^{c_{3}  \log N} \leq 2^{C N^{1-1/t}} 
\end{eqnarray*}
for $N$ large enough.
\end{proof}

\section{Proof of the result on the van der Corput sequence} \label{sect_d}

Let
$$
P_N:= \prod^{N}_{k=1} 2 \sin (\pi x_k)~\mbox{and}~f(k) := 2 \sin (\pi x_k),
$$
where $x_k$ is the $k^{{\rm th}}$ element of the van der Corput sequence.

\begin{lemma} \label{lem_a}
Let (in dyadic representation)
$$n:= a_{s} a_{s-1} \ldots a_{k+1}\underbrace{ 0 1 1 \ldots 1 1}_{a_ka_{k-1}\ldots a_{l+1}} \underbrace{0 1 1 \ldots 1}_{a_l a_{l-1} \ldots a_0}$$
and 
$$\overline{n} := a_{s} a_{s-1} \ldots a_{k+1} 111 \ldots 11011 \ldots1.$$
Then $P_{\overline{n}} > 2 P_{n}$.
\end{lemma}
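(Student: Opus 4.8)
The starting point is that $\overline n=n+2^{k}$: the binary expansions of $n$ and $\overline n$ agree everywhere except in position $k$, where $n$ has digit $0$ and $\overline n$ has digit $1$. Since $\overline n>n$ and every factor $2\sin(\pi x_{i})$ with $i\ge 1$ is positive, it suffices to show
$$
\frac{P_{\overline n}}{P_{n}}=\prod_{j=1}^{2^{k}}2\sin(\pi x_{n+j})>2 .
$$
To analyse the indices $n+1,\dots,n+2^{k}$ I would write $n=H\,2^{k+1}+B$, where $B=2^{k}-2^{l}-1$ is the number formed by the low $k$ binary digits of $n$ and $H$ the number formed by the digits of $n$ in positions $\ge k+1$. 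Because these low $k$ digits represent $B<2^{k}$, adding any $j$ with $1\le j\le 2^{k}$ produces no carry past position $k$, so $n+j$ still has $H$ in positions $\ge k+1$; hence $x_{n+j}$ equals $\delta:=2^{-(k+1)}x_{H}$ plus the digit-reversal of the $k+1$ lowest digits of $n+j$ read after the radix point. I will set $\theta:=\pi\,2^{\,l-k-1}$ and $\psi:=x_{H}$; note $0<\theta\le\pi/4$ (because $k\ge l+1$) and $0\le\psi<1$.

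Next I would compute the product in closed form. Split the $2^{k}$ indices into the $2^{l}$ values of $j$ with $B+j\le 2^{k}-1$ (digit $k$ of $n+j$ still $0$) and the remaining $2^{k}-2^{l}$ values (digit $k$ equal to $1$). In each group the digits of $n+j$ in positions $l,\dots,k-1$ are forced to equal $1$, so the corresponding subproduct can be rewritten as $\prod_{p=1}^{2^{l}}2\sin\big(\pi(p\,2^{-l}+c)\big)$ or $\prod_{p=0}^{2^{k}-1}2\sin\big(\pi(p\,2^{-k}+c)\big)$ for explicit constants $c$, to which Lemma~\ref{lem_c_fritz}(ii) applies (one checks in each case that the shift keeps the argument in $[0,1]$). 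Assembling the pieces gives
$$
\frac{P_{\overline n}}{P_{n}}=\frac{2\sin\big(\theta(2-\psi)\big)\cos(\pi\psi/2)}{\sin\big(\theta(1-\psi)\big)} .
$$

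It then remains to show the right-hand side exceeds $2$ for all $\theta\in(0,\pi/4]$ and $\psi\in[0,1)$. Writing $\sin(\theta(2-\psi))=\sin(\theta(1-\psi))\cos\theta+\cos(\theta(1-\psi))\sin\theta$ and dividing through by $\sin(\theta(1-\psi))>0$, then using $\tan(\theta(1-\psi))\le(1-\psi)\tan\theta$ (convexity of $\tan$ on $[0,\pi/4]$) and $\cos\theta\ge 1/\sqrt2$, the inequality reduces to the single-variable estimate $(2-\psi)\cos(\pi\psi/2)>\sqrt2\,(1-\psi)$ on $[0,1)$. This follows from an elementary calculus argument: the difference of the two sides is positive at $\psi=0$, vanishes at $\psi=1$, and is unimodal on $[0,1]$ (its second derivative changes sign exactly once, since $\tan(\pi\psi/2)$ is increasing and $\tfrac{\pi}{4}(2-\psi)$ decreasing, hence its first derivative changes sign exactly once, from $+$ to $-$).

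The main obstacle is the middle step: keeping track of which of the $2^{k}$ consecutive indices carry into digit $k$, and matching the forced digit patterns of the remaining indices to the right instances of Lemma~\ref{lem_c_fritz}(ii) without sign or range errors. Once the closed form is in hand the bound $>2$ is routine; a useful sanity check is the case $H=0$ (no digits of $n$ above position $k$), where the formula specializes to $4\cos\theta\ge 4\cos(\pi/4)=2\sqrt2>2$, which also indicates that the margin over $2$ is never large.
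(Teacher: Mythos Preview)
Your identification $\overline n=n+2^{k}$ is correct, and the closed form you state for $P_{\overline n}/P_n$ coincides (after the substitution $z=(1-\psi)/2^{m+1}$, $m=k-l$) with the expression the paper derives. However, your sketch of how to obtain it contains a genuine gap. You split the $2^k$ indices $j=1,\dots,2^k$ into the $2^l$ values with $B+j\le 2^k-1$ and the remaining $2^k-2^l$ values. For the first group your description is right: bits $l,\dots,k-1$ of $n+j$ are all equal to $1$, the low $l$ bits run through a full cycle, and Lemma~\ref{lem_c_fritz}(ii) applies over $2^l$ terms. For the second group both of your claims fail: the low $k$ bits of $n+j$ run only from $0$ to $2^k-2^l-1$, so positions $l,\dots,k-1$ are \emph{not} forced to be $1$, and the group has $2^k-2^l$ terms, not $2^k$, hence it is not a complete residue system to which Lemma~\ref{lem_c_fritz}(ii) applies. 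The paper sidesteps this by overshooting and dividing back:
\[
\prod_{j=1}^{2^k}f(n+j)=\frac{\bigl(\prod_{j=1}^{2^l}f(n+j)\bigr)\bigl(\prod_{j=2^l+1}^{2^l+2^k}f(n+j)\bigr)}{\prod_{j=2^k+1}^{2^k+2^l}f(n+j)},
\]
and now each of the three products \emph{is} a full cycle (of lengths $2^l$, $2^k$, $2^l$ respectively) to which Lemma~\ref{lem_c_fritz}(ii) applies directly, yielding the closed form you quote. This overshoot is precisely the ``middle step'' you flag as the main obstacle, and your proposed decomposition does not carry it out.

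Your final inequality argument (the convexity bound $\tan((1-\psi)\theta)\le(1-\psi)\tan\theta$ and the unimodality of $(2-\psi)\cos(\pi\psi/2)-\sqrt{2}(1-\psi)$) is correct, but noticeably heavier than the paper's. In the paper's variables the ratio reads $\Gamma=2\sin\!\bigl(\pi(z+2^{-m-1})\bigr)\cdot\dfrac{\sin(\pi 2^m z)}{\sin(\pi z)}$ with $0<z<2^{-m-1}$; the first factor is increasing in $z$ and the Dirichlet-type quotient is decreasing (again by convexity of $\tan$), so evaluating them at opposite endpoints gives $\Gamma>2$ in one line, with no single-variable calculus needed.
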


\begin{proof}
We have 
$$
P_{\overline{n}}=P_n \, \frac{f(n+1) \cdots  f(n+2^{l}) f(n+2^{l}+1)\cdots  f(n+2^{l}+2^{k})}{f(n+2^{k}+1)\cdots f(n+2^{k}+2^{l})}.
$$
Since 
$\{x_{n+1}, \ldots, x_{n+2^{l}}\} = \{\xi, \xi + \frac{1}{2^{l}}, \ldots, \xi + \frac{2^{l}-1}{2^{l}}\}$ with
$$
\xi =\frac{1}{2^{l+1}} + \cdots + \frac{1}{2^{k}} + \frac{a_{k+1}}{2^{k+2}} + \cdots + \frac{a_{s}}{2^{s+1}},
$$
we obtain from equation (ii) of Lemma~\ref{lem_c_fritz}
$$
f(n+1) \cdots f(n+2^{l}) = 2 \sin( \pi 2^l \xi).
$$
Furthermore, 
$\{x_{n+2^{l}+1}, \ldots, x_{n+2^{l}+2^{k}}\}= \{y, y+\frac{1}{2^{k}}, \ldots, y + \frac{2^{k}-1}{2^{k}}\}$
with $$y=\frac{1}{2^{k+1}} + \frac{a_{k+1}}{2^{k+2}} + \cdots + \frac{a_{s}}{2^{s+1}}$$ and hence, again by equation (ii) of Lemma~\ref{lem_c_fritz},
$$
f(n+2^{l}+1) \cdots f(n+2^{l}+2^{k}+1) = 2 \sin (\pi 2^k y).
$$
Note that $\frac{1}{2^{k+1}} < y < \frac{1}{2^{k}}$.

In the same way we have
$\{x_{n+2^{k}+1}, \ldots, x_{n+2^{k}+2^{l}}\} = \{\tau, \tau + \frac{1}{2^{l}}, \ldots, \tau + \frac{2^{l}-1}{2^{l}}\}$
with
$$
\tau = \frac{1}{2^{l+1}} + \cdots + \frac{1}{2^{k+1}} + \frac{a_{k+1}}{2^{k+2}} + \cdots + \frac{a_{s}}{2^{s+1}}
$$
and hence by equation (ii) of Lemma~\ref{lem_c_fritz}
$$
f(n + 2^{k}+1) \cdots f(n+2^{k}+2^{l}) = 2 \sin( \pi 2^l \tau).
$$
So
$$
P_{\overline{n}}=P_n \frac{2 \sin(\pi 2^l \xi) \sin(\pi 2^k y)}{\sin(\pi 2^l \tau)}.
$$
We have to show that
$$
\Gamma:= \frac{2 \sin(\pi 2^l \xi) \sin(\pi 2^k y)}{\sin(\pi 2^l \tau)} > 1.
$$

Since $\tau = y + \frac{1}{2^{l}}-\frac{1}{2^{k}}$ and $\xi = y + \frac{1}{2^{l}}-\frac{1}{2^{k}}-\frac{1}{2^{k+1}}$ it follows that  
$$
\Gamma =  \frac{2\sin( \pi (2^{l} y+1-\frac{1}{2^{k-l}} - \frac{1}{2^{k+1-l}}))  \sin(\pi 2^{k}y)}{\sin(\pi (2^{l} y+1-\frac{1}{2^{k-l}}))}.
$$
Let $k-l =: m$ and $2^{l} y =: \eta$. Then we have  $\frac{1}{2^{m+1}}<\eta<\frac{1}{2^{m}}$ and 
$$
\Gamma = \frac{2 \sin (\pi (\eta +1-\frac{1}{2^{m}} - \frac{1}{2^{m+1}})) \sin(\pi 2^{m} \eta)}{\sin (\pi (\eta +1-\frac{1}{2^{m}}))}.
$$
Let $z:= \frac{1}{2^{m}}-\eta$. Then we have $0 < z < \frac{1}{2^{m+1}}$ and
\begin{align*}
\Gamma  = & \frac{2 \sin(\pi(1-z-\frac{1}{2^{m+1}})) \sin(\pi(1-2^{m}z))}{\sin (\pi(1-z))} \\
 = & \frac{2 \sin(\pi (z+\frac{1}{2^{m+1}})) \sin(\pi 2^m z)}{\sin (\pi z)} \\
 > &  \frac{2 \sin(\pi \frac{1}{2^{m+1}}) \sin (\pi 2^{m} \frac{1}{2^{m+1}})}{\sin(\pi \frac{1}{2^{m+1}})} \\
 = & 2.
\end{align*}
Here we used that $\sin (\pi (z + \frac{1}{2^{m+1}}))$ for $0 < z< \frac{1}{2^{m+1}}$ is minimal for $z \rightarrow 0$ and $\frac{\sin (\pi 2^{m} z)}{\sin( \pi z)}$ for $0 < z < \frac{1}{2^{m+1}}$ is minimal for $z\rightarrow \frac{1}{2^{m+1}}$.
\end{proof}

\begin{lemma} \label{lem_b}
We have:
\begin{enumerate}[(i)]
\item \begin{tabular}{llll} Let & $n$ & $=$ & $\overset{s}{\overset{\downarrow}{1}}111 \ldots 111\hspace{-0,172cm}\overset{k+1}{\overset{\downarrow}{0}}\hspace{-0,172cm}111 \ldots 1110$ \\
and & $\overline{n}$ & $=$ & $1111 \ldots 1111011 \ldots 1110$ \\
\end{tabular}

\hspace{-0,27cm}then $P_{\overline{n}} \geq P_n$.
\item \begin{tabular}{llll} Let & $n$ & $=$ & $1\hspace{-0,165cm}\overset{s-1}{\overset{\downarrow}{0}}\hspace{-0,165cm}11 \ldots111\hspace{-0,165cm}\overset{k+1}{\overset{\downarrow}{0}}\hspace{-0,165cm}111 \ldots1110$ \\
and & $\overline{n}$ & $=$ & $1011\ldots 1111011 \ldots 1110$ \\
\end{tabular}

\hspace{-0,27cm}then $P_{\overline{n}} \geq P_n$.
\item \begin{tabular}{llll} Let & $n$ & $=$ & $1111\ldots111\hspace{-0,172cm}\overset{k+1}{\overset{\downarrow}{0}}\hspace{-0,172cm}111 \ldots1111$ \\
and & $\overline{n}$ & $=$ & $1111\ldots1111011\ldots1111$ \\
\end{tabular}

\hspace{-0,27cm}then $P_{\overline{n}} \geq P_n.$\\

\item \begin{tabular}{llll} Let & $n$ & $=$ & $1011 \ldots1110111 \ldots1111$ \\
and & $\overline{n}$ & $=$ & $1011\ldots1111011\ldots1111$ \\
\end{tabular}

\hspace{-0,27cm}then $P_{\overline{n}} \geq P_n$.
\end{enumerate}
\end{lemma}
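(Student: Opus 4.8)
The plan is to run all four parts through the same machine as the proof of Lemma~\ref{lem_a}. In each of the four degenerate digit patterns $\overline{n}$ arises from $n$ by moving the distinguished $0$-digit one place downward; since the digit immediately below it is a $1$, this means $\overline{n}=n+2^{k}$ together with a carry. So I would start from
\[
\frac{P_{\overline{n}}}{P_{n}}=\prod_{t=0}^{2^{k}-1}f(n+1+t),
\]
track that carry through the binary addition, and observe that the bit-reversed points $\{x_{n+1+t}:0\le t\le 2^{k}-1\}$ split into a single exceptional point $x_{n+1}$ — present exactly when $n$ has a trailing $0$, i.e.\ in parts (i) and (ii) — together with a family of the form $\{\beta+m/2^{k}:0\le m\le 2^{k}-1\}$, with the single value $m=2^{k}-1$ deleted in those same two parts. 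Collapsing the complete family by Lemma~\ref{lem_c_fritz}(ii), and dividing out the missing term when there is one, reduces each of the four claims to an explicit trigonometric inequality in the two small quantities $2^{-k}$ and $2^{-s}$.

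For parts (iii) and (iv) (no trailing $0$) the family is complete and $P_{\overline{n}}/P_{n}$ comes out as simply $2\cos\!\big(c\pi 2^{k-s-1}\big)$, with $c=1$ in (iii) and $c=3$ in (iv). Since in these configurations $s\ge k+2$ (resp.\ $s\ge k+3$), the cosine's argument is at most $\pi/8$ (resp.\ $3\pi/16$), so $P_{\overline{n}}/P_{n}\ge 2\cos(3\pi/16)>1$ and those two cases are immediate. For parts (i) and (ii) one term is missing and $x_{n+1}$ contributes, and I expect the computation to give
\[
\frac{P_{\overline{n}}}{P_{n}}=\frac{2\sin\!\big((u+v)\pi\big)\cos\!\big((v/4u)\pi\big)}{\sin\!\big((2u+v)\pi\big)},
\]
where $u=2^{-k-2}\le\tfrac18$ and $v=2^{-s-1}$ in part (i), $v=3\cdot2^{-s-1}$ in part (ii); in both, $v/u\le\tfrac34$ and $v/4u\le\tfrac{3}{16}$. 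I would then apply the addition theorem to the denominator and divide by $\cos((u+v)\pi)>0$, turning $P_{\overline{n}}/P_{n}\ge1$ into $\tan((u+v)\pi)\big(2\cos((v/4u)\pi)-\cos(u\pi)\big)\ge\sin(u\pi)$; using convexity of $\tan$ on $(0,\pi/2)$ in the form $\tan((u+v)\pi)\ge\tfrac{u+v}{u}\tan(u\pi)$, this reduces to $2(u+v)\cos((v/4u)\pi)\ge(2u+v)\cos(u\pi)$, and after rewriting the right-hand side as $(2u+v)[\cos((v/4u)\pi)-\cos(u\pi)]+v\cos((v/4u)\pi)$ the only nontrivial case $v/4u>u$ follows from $1-\cos\theta\le\theta^{2}/2$ together with $v/u\le\tfrac34$.

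The hard part will not be any single step but two things. First, the carry bookkeeping: I must verify, separately for each of the four patterns, which block of indices in $[n+1,\overline{n}]$ yields a complete arithmetic progression and which yields a progression with its top point removed, and pin down the base point $\beta$ correctly — in particular the leading ``$10$'' in parts (ii) and (iv) shifts $\beta$ by terms of order $2^{-s}$ (and turns the ``$1$'' into a ``$3$'' in the formulas above). Second, and more delicate, the inequalities in parts (i) and (ii) are essentially tight: $P_{\overline{n}}/P_{n}\to1$ as $k,s\to\infty$, so — unlike in Lemma~\ref{lem_a}, where there is an entire factor of $2$ to spare — one cannot estimate the sine factors crudely but has to retain the second-order behaviour, which is exactly what the pairing ``$\tan$ convex'' plus ``$1-\cos\theta\le\theta^{2}/2$'' is designed to capture.
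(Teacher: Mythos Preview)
Your approach is essentially the same as the paper's. The paper proves only part~(ii) in detail (declaring the other three ``handled in the same way but even simpler''), and for that part it writes $P_{\overline n}/P_n$ as the product of the $f$-values over $n+1,\dots,\overline n$, separates off $f(n+1)$, collapses the remaining $2^k-1$ terms via Lemma~\ref{lem_c_fritz}(ii), and obtains exactly your expression
\[
\frac{P_{\overline n}}{P_n}=\frac{2\sin\!\big(\pi(\tfrac{1}{2^{k+2}}+\tfrac{3}{2^{s+1}})\big)\cos\!\big(\tfrac{3\pi}{2^{s-k+1}}\big)}{\sin\!\big(\pi(\tfrac{1}{2^{k+1}}+\tfrac{3}{2^{s+1}})\big)},
\]
which in your variables is precisely $\dfrac{2\sin((u+v)\pi)\cos((v/4u)\pi)}{\sin((2u+v)\pi)}$ with $u=2^{-k-2}$, $v=3\cdot2^{-s-1}$. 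The only difference is the endgame: the paper stops here and asserts that ``some tedious but elementary analysis'' of $g(x,y)=\dfrac{2\sin(\pi(\tfrac{x}{4}+\tfrac{3y}{2}))\cos(\tfrac{3\pi y}{2x})}{\sin(\pi(\tfrac{x}{2}+\tfrac{3y}{2}))}$ on the relevant range shows $g>1$, whereas you actually carry out that analysis via the addition formula for $\sin((2u+v)\pi)$, the monotonicity of $\tan(x)/x$, and $1-\cos\theta\le\theta^2/2$. Your observation that the ratio tends to $1$ (so no constant can be thrown away, in contrast with Lemma~\ref{lem_a}) is exactly why the paper calls the verification ``tedious''. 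One small slip: what you call ``rewriting the right-hand side'' is really the rewriting of the \emph{difference} $2(u+v)\cos((v/4u)\pi)-(2u+v)\cos(u\pi)$; the argument itself is fine.
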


\begin{proof}
We only prove (ii), which is the most elaborate part of the lemma. The other assertions can be handled in the same way but even simpler. In (ii) we have
\begin{align*}
P_{\overline{n}} = & P_n  f(10111\ldots110111 \ldots111) \prod^{2^{k}-2}_{i=0} f(1011\ldots100\ldots0+i) \\
= & P_n 2 \sin \left(\pi \left(1-\frac{1}{2^{k+2}}- \frac{3}{2^{s+1}}\right)\right)  \frac{\sin (\pi x)}{\sin(\pi \frac{1-x}{2^{k}})}
\end{align*}
with $x=2^k(\frac{1}{2^{k+1}} - \frac{3}{2^{s+1}})$. Hence
$$
P_{\overline{n}} = P_n \frac{2 \sin (\pi (\frac{1}{2^{k+2}} + \frac{3}{2^{s+1}}) \cos(\pi \frac{3}{2^{s-k+1}})}{\sin(\pi (\frac{1}{2^{k+1}} + \frac{3}{2^{s+1}}))}.
$$
Here $s \geq 4$ and $1 \leq k\leq s-3$. Some tedious but elementary analysis of the function
$$
g(x,y) := \frac{2\sin(\pi(\frac{x}{4} + \frac{3}{2} y)) \cos(\pi \frac{3}{2} \frac{y}{x})}{\sin (\pi (\frac{x}{2} + \frac{3}{2} y))}
$$
for $0 < y \leq \frac{1}{16}$ and $8 y \leq x\leq \frac{1}{2}$ shows that $g (x,y) > 1$ in this region. Hence $P_{\overline{n}} > P_n$.
\end{proof}

\begin{proof}[Proof of Theorem~\ref{th_a}]
Consider $n$ with $2^{s} \leq n < 2^{s+1}$. From Lemma~\ref{lem_a} and Lemma~\ref{lem_b} it follows that for $2^{s}+2^{s-1} \leq n < 2^{s+1}$ the product $P_n$ has its largest values for
$$
n_{1}=111\ldots11110=2^{s+1}-2
$$
$$
n_{2}=111\ldots11101=2^{s+1}-3
$$
$$
n_{3}=111 \ldots11100=2^{s+1}-4
$$
and for $2^{s} \leq n < 2^{s}+2^{s-1}$ the product $P_n$ has its largest values for
$$
n_{4}=101\ldots11110=2^{s+1}-2^{s-1}-2
$$
$$
n_{5}=101 \ldots11101=2^{s+1}-2^{s-1}-3
$$
$$
n_{6}=101\ldots11100=2^{s+1}-2^{s-1}-4.
$$
By equation (i) of Lemma~\ref{lem_c_fritz} we have
$$
P_{n_{1}} = \frac{2^s}{\sin(\pi/2^{s+1})}
$$
hence $\frac{1}{n_1^2} P_{n_{1}} \rightarrow \frac{1}{2 \pi}$ for $s$ to infinity. Furthermore,
\begin{align*}
P_{n_2}  = & \frac{2^s}{\sin(\pi/2^{s+1})  f(2^{s+1}-2)} = \frac{2^s}{\sin(\pi/2^{s+1}) 2 \sin(\pi (\frac{1}{2} - \frac{1}{2^{s+1}}))} \\
= & \frac{2^{s-1}}{\sin(\pi/2^{s+1}) \cos(\pi/2^{s+1})},
\end{align*}
and hence $\frac{1}{n_2^2} P_{n_{2}} \rightarrow \frac{1}{4 \pi}$ for $s$ to infinity. Finally
\begin{align*}
P_{n_3} = & \frac{2^{s-1}}{\sin(\pi/2^{s+1}) \cos(\pi/2^{s+1}) f(2^{s+1}-3)} \\
= & \frac{2^{s-1}}{\sin(\pi/2^{s+1}) \cos(\pi/2^{s+1})2\sin(\pi (1-\frac{1}{4}-\frac{1}{2^{s+1}}))} \\
= & \frac{2^{s-2}}{\sin(\pi/2^{s+1}) \cos(\pi/2^{s+1}) \sin(\pi (\frac{1}{4} + \frac{1}{2^{s+1}}))}.
\end{align*}
Let now $2^{s}+2^{s-1}\leq n \leq n_{3}$ be arbitrary. Then
$$
\frac{1}{n^{2}} P_n \leq \frac{1}{(2^{s}+2^{s-1})^2} P_{n_3},
$$
and the last term tends to 
$$
\frac{2}{9 \pi  \sin \frac{\pi}{4}} < \frac{1}{2 \pi}.
$$
Hence for all $s$ large enough we have $\frac{1}{n^{2}} P_n < \frac{1}{2 \pi}$ for all $2^{s} + 2^{s-1} \leq n < n_{3}$. 

We still have to consider $n$ with $2^{s} \leq n < 2^{s}+2^{s-1}$. With equation (ii) of Lemma~\ref{lem_c_fritz} we have
\begin{align*}
P_{n_4} = & P_{n_1} \frac{1}{f(1011\ldots111) \prod^{2^{s-1}-2}_{i=0} f(11000\ldots00+i)} \\
= & P_{n_1} \frac{1}{2 \sin( \frac{3\pi}{2} \frac{1}{2^{s}})} \frac{\sin(\frac{\pi}{2^{s+1}})}{\sin \frac{3 \pi}{4}}.
\end{align*}
The product $\kappa_{s}$ of the last two factors tends to $\frac{1}{3 \sqrt{2}}$ for $s$ to infinity.

Furthermore, it is easily checked that $P_{n_5}$ and $P_{n_6}$ are smaller than $P_{n_4}$. Hence for all $n$ with $2^s \leq n < 2^s + 2^{s-1}$ we have
$$
\frac{P_n}{n^2} \leq \frac{P_{n_4}}{2^{2s}} = \frac{P_{n_1}}{n_1^2} \frac{(2^{s+1}-2)^2}{2^{2s}} \kappa_s
$$
which tends to $\frac{1}{2 \pi} \frac{4}{3 \sqrt{2}} < \frac{1}{2 \pi}$ for $s$ to infinity. So altogether we have shown that
$$
\limsup_{n \rightarrow \infty} \frac{1}{n^{2}} \prod^{n}_{i=1} 2 \sin(\pi x_i) = \frac{1}{2 \pi}.
$$
From Lemma~\ref{lem_a} and from equation (i) of Lemma~\ref{lem_c_fritz} it also follows that for all $s$ we have
$$
\min_{2^s \leq n < 2^{s+1}} P_{n} = P_{2^s} = 2^{s+1} \sin\left(\frac{\pi}{2^{s+1}}\right)
$$
which tends to $\pi$ for $s$ to infinity. This gives the lower bound in Theorem~\ref{th_a}.
\end{proof}

\section{Proof of the probabilistic results} \label{sect_prob}

In the first part of this section we consider products

\begin{align} P_{N}=\prod^{N}_{k=1}2\sin (\pi X_{k}), \label{8}
\end{align}

where $(X_{k})_{k\geq 1}$ is a sequence of i.i.d.\ random variables
on $[0,1]$. We want to determine the almost sure asymptotic behavior of \eqref{8}. We take logarithms and define

\begin{align}S_{N}=\log P_{N}=\sum^{N}_{k=1}\log (2\sin (\pi
X_{k}))= \sum^{N}_{k=1}Y_{k}, \label{9}
\end {align}

where $Y_{k}= \log (2\sin (\pi X_{k}))$ is again an i.i.d.\ sequence.
Thus we can apply Kolmogorov's law of the iterated logarithm \cite{kolm} (see also Feller \cite{feller})
in the i.i.d.\ case. However, for later use we state this LIL in a more general form below.

\begin{lemma} \label{lemma7}
Let $(Z_{k})_{k\geq 1}$ be
a sequence of independent random variables with expectations $\mathbb{E}Z_{k}= 0$ and finite variances $\mathbb{E}Z^{2}_{k}<\infty$, and let
$B_{N}=\sum^{N}_{k=1}\mathbb{E}Z_{k}^{2}$. Assume there are
positive numbers $M_{N}$ such that
\begin{align}| Z_{N}|\leq M_{N} \qquad \text{and} \qquad M_{N}=o\left(\sqrt{\frac{B_{N}}{\log\log
B_{N}}}\right). \label{10}
\end{align}
Then $S_{N}=\sum^{N}_{k=1}Z_{k}$ satisfies a law of the iterated
logarithm
\begin{align}\limsup_{N\rightarrow\infty}\frac{S_{N}}{\sqrt{B_{N}\log\log
B_{N}}}= \sqrt{2} \qquad \text{almost surely.}
\end{align}
\end{lemma}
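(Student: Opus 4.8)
The statement of Lemma~\ref{lemma7} is exactly Kolmogorov's classical law of the iterated logarithm for sums of \emph{bounded} independent random variables, with \eqref{10} being precisely Kolmogorov's growth restriction on the bounds $M_N$; so the shortest route is to invoke \cite{kolm} directly (see also \cite{feller}). If one wants to reproduce the argument, I would write $L_N := \sqrt{2\, B_N \log\log B_N}$, observe that, applying everything simultaneously to $(Z_k)$ and $(-Z_k)$, it suffices to establish $\limsup_{N\to\infty} S_N/L_N = 1$ almost surely, and then treat the upper and lower halves separately; the constant $\sqrt{2}$ in the statement is just the ratio $L_N/\sqrt{B_N\log\log B_N}$.

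\emph{Upper half.} The plan is to show $\limsup_{N\to\infty} S_N/L_N \le 1$ a.s. The engine is a one-sided exponential (Bennett--Bernstein) estimate: since $\mathbb{E}Z_k = 0$ and $|Z_k|\le M_k$, for $\lambda M_k$ small one has $\mathbb{E}\exp(\lambda Z_k) \le \exp\big(\tfrac{\lambda^2}{2}\mathbb{E}Z_k^2\,(1+c\lambda M_k)\big)$ with an absolute $c$, hence $\mathbb{E}\exp(\lambda S_N) \le \exp\big(\tfrac{\lambda^2 B_N}{2}(1+c\lambda M_N)\big)$, and Markov's inequality optimised at $\lambda\asymp x/B_N$ gives
$$
\mathbb{P}(S_N\ge x)\ \le\ \exp\!\Big(-\frac{x^2}{2B_N}\big(1 - c'\tfrac{x M_N}{B_N}\big)\Big).
$$
Next I would fix $\theta>1$, take a subsequence $N_n$ with $B_{N_n}\asymp\theta^n$, and apply this with $x=(1+\varepsilon)L_{N_n}$; here the relevant scale is $\lambda\asymp\sqrt{(\log\log B_N)/B_N}$, so $\lambda M_N = o(1)$ by \eqref{10}, the right-hand side becomes $\asymp n^{-(1+\varepsilon)^2(1-o(1))}$ and is summable, and Borel--Cantelli yields $S_{N_n}\le(1+\varepsilon)L_{N_n}$ eventually. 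A Lévy--Ottaviani maximal inequality promotes this to $\max_{N\le N_n}S_N$, and since $\theta$ may be taken arbitrarily close to $1$ the interpolation between consecutive $N_n$ costs only a factor $1+o(1)$; letting $\varepsilon\downarrow0$ and $\theta\downarrow1$ gives the upper half.

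\emph{Lower half.} For $\limsup_{N\to\infty} S_N/L_N\ge1$ a.s.\ I would choose a \emph{fast} subsequence with $B_{N_n}\asymp\theta^n$ and $\theta$ large, so that the independent increments $T_n:=S_{N_{n+1}}-S_{N_n}$ have variance $B_{N_{n+1}}-B_{N_n}\sim B_{N_{n+1}}$. The key input is a matching \emph{lower} bound for moderate deviations of bounded independent sums, namely $\mathbb{P}\big(T_n\ge(1-\varepsilon)L_{N_{n+1}}\big)\ge n^{-(1-\varepsilon)^2(1+o(1))}$, which is exactly the regime in which the smallness forced by \eqref{10} legitimises a Gaussian lower estimate (again available from the treatments in \cite{kolm,feller}). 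The events $\{T_n\ge(1-\varepsilon)L_{N_{n+1}}\}$ are independent with non-summable probabilities, so by the second Borel--Cantelli lemma they occur infinitely often; combining this with the already proved upper half applied to $-S_{N_n}$, and with $B_{N_n}/B_{N_{n+1}}\to0$, we obtain $S_{N_{n+1}}=T_n+S_{N_n}\ge(1-3\varepsilon)L_{N_{n+1}}$ infinitely often. Sending $\theta\to\infty$ and $\varepsilon\downarrow0$ finishes the proof.

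\emph{Where the difficulty lies.} The one genuinely delicate point is pinning down the constants in the two exponential estimates, and in particular verifying that hypothesis \eqref{10} is exactly what neutralises the correction term $c\lambda M_N$ at the critical scale $\lambda\asymp\sqrt{(\log\log B_N)/B_N}$. The lower half is the harder one, since it needs two-sided (not merely upper) control of the moderate-deviation probabilities together with the bookkeeping of block sums and maximal inequalities. Since all of this is entirely classical, in the paper I would simply cite Kolmogorov's theorem \cite{kolm} (cf.\ \cite{feller}) rather than reproduce the proof.
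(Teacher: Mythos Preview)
Your proposal is correct and matches the paper's approach exactly: the paper does not prove Lemma~\ref{lemma7} at all but simply states it as Kolmogorov's classical LIL, citing \cite{kolm} and \cite{feller}, which is precisely what you suggest as the ``shortest route.'' Your additional sketch of the upper and lower halves is accurate and goes well beyond what the paper provides.
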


In the case of centered i.i.d random variables
${Z}_{k}$ with finite variance, we have $B_{N}=bN$ with
$b=\mathbb{E}Z^{2}_{1}$. Thus in this case

\begin{align}\limsup_{N\rightarrow\infty}\frac{S_{N}}{\sqrt{N \log\log
N}}=\sqrt{2 b}\quad\quad\quad\text{almost surely.}
\end{align}

In order to apply Lemma~\ref{lemma7} to the sum \eqref{9}, we note that
$$\mathbb{E} Y_k = \mathbb{E} (\log(2\sin (\pi X_{k}))) = \int^{1}_{0}\log (2 \sin (\pi x)) \rd x= 0,$$
and compute the variance
$$\mathbb{E} Y_k^ 2 = \mathbb{E} (\log^{2}(2\sin (\pi X_{k}))) =\int^{1}_{0}\log^{2}(2\sin(\pi x)) \rd x=\frac{\pi^2}{12}.$$
This proves Theorem \ref{th51}.\\

For the proof of Theorem \ref{th52} we split
the corresponding logarithmic sum into two parts
\begin{eqnarray}
\lefteqn{\sum_{1 \leq n_k \leq N} \log (2\sin (\pi n_{k}\alpha))} \nonumber\\
& = & \frac{1}{2} \left(\sum^{N}_{n=1}\log (2\sin (\pi
n \alpha)) + \sum^{N}_{n=1} R_{n}\log (2\sin (\pi n\alpha)) \right), \label{13}
\end{eqnarray}
where $R_{n}=R_{n}(t)$ denotes the $n^{{\rm th}}$ Rademacher function on
$[0,1]$ and the space of subsequences of the positive integers
corresponds to $[0,1]$ equipped with the Lebesgue measure. For irrationals $\alpha$ with bounded continued
fraction expansion, by Corollary~\ref{co_prodb} we have

\begin{align}\sum^{N}_{n=1}\log (2\sin (\pi
n \alpha)) =O(\log^{2}N). \label{14}
\end{align}

For the second sum in \eqref{13} we set $Z_{n}=R_{n}\log (2 \sin (\pi
n \alpha))$ and apply Lemma~\ref{lemma7}. The random variables $Z_{n}$ are
clearly independent and thus we have to compute the quantities
$B_{N}$ and check condition \eqref{10}. Obviously, $\mathbb{E}Z_{n}=0$
and $\mathbb{E}Z^{2}_{n}=\log^{2} (2\sin (\pi n \alpha))$.
Using the fact that
$$
|\sin (\pi n \alpha)|\geq 2 \| n \alpha \| \geq\frac{{c}_{0}}{n},
$$
with some positive constant $c_0$, we obtain
$$| Z_{N}| \leq c_{1} \log N$$

with some $c_{1}> 0$. Using Koksma's inequality and discrepancy estimates for $(n \alpha)_{n \geq 1}$ it can easily been shown that
\begin{eqnarray*}
\frac{B_{N}}{N} = \frac{1}{N}\sum_{n=1}^N \log^{2} (2\sin (\pi n \alpha))
 \to  \int_0^1 \log^{2} (2\sin (\pi n \alpha)) \rd \alpha = \frac{\pi^2}{12}.
\end{eqnarray*}
Thus, the conditions of Lemma~\ref{lemma7} are satisfied and we have
$$
\limsup_{N \to \infty} \frac{\sum_{n=1}^N Y_n}{\sqrt{N \log \log N}} = \frac{\pi}{\sqrt{6}},\qquad \text{$\mathbb{P}$-almost surely.}
$$
Consequently, from \eqref{13} and \eqref{14} we obtain 
\begin{equation} \label{equfin}
\limsup_{N \to \infty} \frac{\sum_{1 \leq n_k \leq N} \log (2\sin (\pi n_{k}\alpha))}{\sqrt{N \log \log N}}  = \frac{\pi}{2\sqrt{6}}, \qquad \text{$\mathbb{P}$-almost surely.}
\end{equation}
Finally, note that by the strong law of large numbers we have, $\mathbb{P}$-almost surely, that
$$
\# \left\{k:~1 \leq n_k \leq N\right\} \sim \frac{N}{2}.
$$
Consequently, from \eqref{equfin} we can deduce that
$$
\limsup_{N \to \infty} \frac{\sum_{k=1}^N \log (2\sin (\pi n_{k}\alpha))}{\sqrt{N \log \log N}}  = \frac{\pi}{\sqrt{12}}, \qquad \text{$\mathbb{P}$-almost surely.}
$$
This proves Theorem \ref{th52}.\\

\textbf{Acknowledgment.}
We thank Dmitriy Bilyk who drew our attention to the results given in \cite{Knill+Les, Lub}, and \cite{VerMes}.

\end{document}